\documentclass[12pt,a4paper,reqno]{amsart}

\usepackage{amsmath,amsfonts,amsthm,amssymb,graphicx}

\usepackage[shortlabels]{enumitem}

\usepackage[dvipsnames]{xcolor}
\usepackage{hyperref}

\usepackage{orcidlink}

\usepackage[marginratio=1:1,totalwidth=15.75cm,totalheight=22.275cm]{geometry}

\newcommand{\Area}{\operatorname{Area}}

\usepackage{subcaption}

\usepackage{graphicx}

\theoremstyle{plain}
\newtheorem{thm}{Theorem}[section]
\newtheorem{lem}[thm]{Lemma}

\newtheorem{question}[thm]{Question}

\newcommand{\dA}{\mathrm dA}
\newcommand{\Deriv}{\mathrm D}

\renewcommand{\subset}{\subseteq}
\renewcommand{\theta}{\vartheta}
\renewcommand{\phi}{\varphi}

\theoremstyle{definition}

\theoremstyle{remark}
\newtheorem{rmk}[thm]{Remark}

\newcommand{\eps}{\varepsilon}
\newcommand{\interior}{\operatorname{int}}

\newcommand{\DD}{\mathbb{D}}
\newcommand{\dist}{\operatorname{dist}}

\numberwithin{equation}{section}
\newcommand \C{\mathbb{C}}
\newcommand \Ch{\widehat{\mathbb{C}}}

\newcommand \R{\mathbb{R}}
\newcommand \D{\mathbb{D}}

\newcommand*{\defeq}{\mathrel{\vcenter{\baselineskip0.5ex \lineskiplimit0pt
                     \hbox{\scriptsize.}\hbox{\scriptsize.}}}%
                     =}
\newcommand{\eqdef}{=\mathrel{\vcenter{\baselineskip0.5ex \lineskiplimit0pt
                     \hbox{\scriptsize.}\hbox{\scriptsize.}}}}

\begin{document}

\title{Absence of bounded-orbit wandering domains}
\author[N. Prochorov]{Nikolai Prochorov\orcidlink{0000-0001-9725-299X}}

\address{\noindent Dept. of Mathematics \\ The University of Manchester \\ Manchester \\ M13 9PL \\ UK}
\email{nikolai.prochorov@manchester.ac.uk}

\author[L. Rempe]{Lasse Rempe\orcidlink{0000-0001-8032-8580}} 
\address{\noindent Dept. of Mathematics \\ The University of Manchester \\ Manchester \\ M13 9PL \\ UK}
\email{lasse.rempe@manchester.ac.uk}

\author[J. Waterman]{James Waterman\orcidlink{0000-0001-7266-0292}}
\address{Center for Computing Sciences\\ Bowie \\ MD 20715 \\ USA}
\email{jawater@super.org}

\subjclass[2020]{Primary 37F10; Secondary 30D05}

\date{\today}

\begin{abstract}
 We prove that transcendental entire functions do not have bounded-orbit wandering domains. This
  answers a long-standing open question. In fact,
  we prove a more general theorem, establishing the absence of certain wandering domains even for
  locally defined analytic functions. 


 The proof is based on a recent remarkable argument of Ye, who obtained an elementary proof of
  Sullivan's classical theorem that rational maps do not have wandering domains. 
\end{abstract}

\maketitle

\section{Introduction}
Let $f\colon X\to X$ 
be either a rational function
 $f\colon\Ch\to\Ch$, or a 
 transcendental entire function
 $f\colon \C\to\C$. We may consider 
 $f$ as the transition rule of a
 discrete-time dynamical system. The
 state space $X$ then splits into two
 fundamental parts, 
 defined and studied independently
 by Fatou and Julia in the
 early 20th century in the case
 of rational functions, and extended 
 to transcendental entire functions
 by Fatou in 1926~\cite{fatou26}. These
 sets are now called the \emph{Fatou set} 
 and the \emph{Julia set}. The former
 is an open set and 
 consists of those points where the 
 dynamics of $f$ is \emph{stable} under
 perturbation. More formally, 
 the iterates $\{f^n\}_{n=0}^{\infty}$ 
 are equicontinuous (with respect to the
 spherical metric) near points of $F(f)$.
 The complement is the 
 \emph{Julia set} $J(f)$, where the dynamics is
 unstable.
 We refer to~\cite{milnor06} for background
  in holomorphic dynamics, particularly
  the iteration of rational functions,
  and to~\cite{bergweiler93} for a 
  survey on the iteration of
  transcendental entire and meromorphic functions.

 The connected components of $F(f)$ are 
 called \emph{Fatou components}.
 If $U$ is  a Fatou component, then
 for every $n\geq 0$, the $n$-th 
 forward image
 $f^n(U)$ is contained in a Fatou component
 $U_n$. If $U_n\neq U_m$ for all $n\neq m$, 
 then $U$ is called \emph{wandering};
 otherwise, $U$ is called \emph{eventually periodic}. 
 Eventually periodic Fatou components
 can be classified into  a small 
 finite number of different types, each of whose dynamics
 is well-understood. 

Entire functions may have wandering domains,
  while rational maps do not by a famous theorem of Sullivan~\cite{sullivan85}. 
It is known that every 
 wandering domain $U$ must belong to one
 of three types:
 \begin{enumerate}
    \item in an \emph{escaping} 
      wandering domain, $\lvert f^n(z)\rvert \to \infty$ for every $z\in U$;
     \item in an \emph{oscillating}
      wandering domain, 
        $\liminf_{n\to\infty} \lvert f^n(z)\rvert < \infty =\limsup_{n\to\infty} \lvert f^n(z)\rvert$
        for every $z\in U$;
     \item in a \emph{bounded-orbit}
     wandering domain, 
     $\limsup_{n\to\infty} \lvert f^n(z)\rvert < \infty$ for every $z\in U$.
 \end{enumerate}

 The first example of a wandering domain,
  constructed by Baker~\cite{baker76},
  was an escaping wandering domain.
  Eremenko and Lyubich~\cite{eremenko-lyubich87} constructed the first
  known examples of oscillating
  wandering domains. Wandering domains have been a highly active area of research recently;
  see for example~\cite{befrs,bocthaler21,evdoridou-glucksam-pardosimon23,pardosimon-sixsmith24,martipete-rempe-waterman25,evdoridou-marti-pete-rempe26} and the references in these papers. The existence of bounded-orbit
  wandering domains, on the other hand,
  is a long-standing open question, 
  versions of which appeared three times on Hayman's
  list of \emph{Research Problems in Complex Analysis}~\cite{hayman-lingham19}, in 1984 and 1989 (twice).
  They are listed as Problems 2.67, 2.77 and~2.87 and attributed to Baker, Eremenko, Herman, Kra and Lyubich.

\begin{question}[Bounded-orbit wandering domains]\label{question:bounded-orbit}
 Does there exist a transcendental entire function with a bounded-orbit wandering domain?
\end{question}

 Orbits in hypothetical wandering domains of rational functions would automatically be ``bounded'' in the sense that they are compactly contained in the domain of definition (which is itself compact). 
 Moreover, on a compact subset of the complex plane, there is no discernible difference between a polynomial,
 a transcendental entire
 function, and a general holomorphic function. Indeed, any 
 holomorphic function on a neighbourhood of a compact
 set that does not separate the plane can be arbitrarily
 closely approximated 
 by a polynomial, by Runge's theorem~\cite{runge85}. So 
 it is natural to ask whether different bounded-orbit
 dynamics can arise for
 transcendental entire functions than for polynomials,
 or not. However, Sullivan's proof of 
 the no wandering domains theorem uses deformation
 theory and 
  relies crucially on the fact that rational maps of a given degree depend only on finitely
  many parameters. So it remained unclear whether one should expect the existence of
   bounded-orbit wandering domains for entire functions.

 Recently, a remarkable new proof of Sullivan's theorem was proposed by Ye~\cite{ye26}. This proof
  is essentially elementary, using only well-known properties of the hyperbolic metric
  in plane domains and the Gauss-Bonnet theorem. This is a truly remarkable development,
  given how long Sullivan's proof stood essentially without improvement~-- indeed, in 2014,
  the second author expressed strong scepticism that a simple  proof of
  the no wandering domains theorem could be found~\cite{rempe-mathoverflow-2014}. The proof in~\cite{ye26} also applies to transcendental entire 
  functions with finitely many critical and asymptotic values,
  a result previously proved in~\cite{goldberg-keen86} and~\cite{eremenko-lyubich92}
  using Sullivan's techniques, and indeed to a more
  general class for which the theorem had not previously been
  established: those for which the derived set of the
  set of singular values is contained in the Fatou set. 

  In this paper, we further develop the ideas pioneered in~\cite{ye26} to resolve
  Question~\ref{question:bounded-orbit} in the negative. 
  
\begin{thm}[No bounded-orbit wandering domains]\label{thm:boundedorbitentire}
 Let $f$ be a transcendental entire function, and let $U$ be a wandering Fatou component 
   of $f$. Then there exists a strictly increasing sequence $(n_k)_{k=0}^{\infty}$ 
   such that $f^{n_k}|_U\to\infty$ locally uniformly as $k\to\infty$. 
\end{thm}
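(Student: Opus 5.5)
We argue by contradiction, working with the contrapositive formulation. Suppose $U$ is a wandering domain such that no subsequence of $(f^n|_U)$ tends to $\infty$ locally uniformly. Since $U\subset F(f)$, the family $\{f^n|_U\}$ is normal. Hence every subsequence has a further subsequence converging locally uniformly on $U$, and by assumption the limit is never $\infty$.

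\textbf{Step 1: a uniform bound on compact sets.} The classical result on wandering domains (Fatou, Baker, Bergweiler et al.) says that all limit functions on $U$ are constant. From this we want: for every compact $L\subset U$ there is $R=R(L)$ with $f^n(L)\subset D(0,R)$ for all $n$. Suppose instead that $\sup_{L}\lvert f^{n_j}\rvert\to\infty$ along some sequence $n_j$. Pass to a convergent subsequence; its limit is a finite constant, so $f^{n_j}$ is bounded on $L$ along that subsequence, a contradiction. Hence the first step reduces to a statement about a holomorphic map $f$ defined on a neighbourhood of a compact set containing all the orbits of a fixed compact subset of $U$. This is where the more general local theorem mentioned in the abstract should enter.

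\textbf{Step 2: hyperbolic geometry of the components $U_n$ (following Ye).} Fix a small closed disc $\Delta\subset U$, and let $U_n$ be the Fatou component containing $f^n(U)$. The $U_n$ are pairwise disjoint, and $f^n(\Delta)\subset D(0,R)$ for all $n$. Consequently
\[ \sum_n \Area\bigl(U_n\cap D(0,R+1)\bigr)\le \pi(R+1)^2<\infty. \]
In particular, the Euclidean size of a suitable neighbourhood of $f^n(\Delta)$ inside $U_n$ tends to $0$ along most $n$.

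On the other hand, $f\colon U_n\to U_{n+1}$ does not expand the hyperbolic metric (Schwarz–Pick). Therefore the hyperbolic diameters of the sets $f^n(\Delta)$, measured in $U_n$, stay bounded. Following Ye, I would compare hyperbolic and Euclidean quantities through Gauss–Bonnet. Hyperbolic discs of a fixed radius around $f^n(z_0)$ in $U_n$ have hyperbolic area bounded below. Their Euclidean area can be tied to the Euclidean density $\rho_{U_n}$ via the standard estimate $\rho_{U_n}(w)\asymp 1/\dist(w,\partial U_n)$, up to logarithmic factors. Summability of the areas then forces $\rho_{U_n}(f^n(z_0))\to\infty$ in a quantitative, summable way. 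Combining this with the chain rule gives
\[ \lvert (f^n)'(z_0)\rvert\,\rho_{U_n}(f^n(z_0))\le\rho_U(z_0). \]
This shows $\lvert (f^n)'(z_0)\rvert\to0$ at a controlled rate. The plan is to play this contraction against the expansion that $f$ must exhibit near $\partial U_n\subset J(f)$: Julia points lie in the closure of repelling cycles, and the boundaries $\partial U_n$ accumulate in the compact set $\overline{D(0,R)}$. The aim is to get a contradiction, e.g.\ by a Gauss–Bonnet count of total curvature, or of the number of critical points, on the annular regions $D(0,R+1)\cap U_n$.

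\textbf{Main obstacle.} In Ye's setting the crucial input is that singular values do not accumulate on the relevant part of the Julia set. For a general transcendental $f$ the critical and asymptotic values can be dense. I expect the hardest step to be localising Ye's Gauss–Bonnet estimate so that it only uses $f$ on a neighbourhood $D(0,R+1)$ of the orbit of $\Delta$. It must also absorb critical points of $f^n$ inside the small pieces of $U_n$. My first attempt would use the area-summability above to select a subsequence $n_k$ along which $U_{n_k}\cap D(0,R+1)$ contains no critical values of $f$ near $f^{n_k}(\Delta)$ within a definite hyperbolic neighbourhood. On that neighbourhood I would run Ye's curvature argument for univalent branches, and obtain the contradiction there.
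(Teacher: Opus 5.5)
Your Step~1 is correct and matches the paper's opening reduction: all limit functions on $U$ are finite constants, so orbits of compact subsets of $U$ stay in a fixed disc $V$. The gap is in Step~2, which carries all of the difficulty. You correctly name the obstacle: for transcendental $f$ the singular values cannot be removed, so $f$ is not a covering over $Y=X\setminus S(f)$, and Ye's pointwise inequality $\rho_X\le\lvert f'\rvert\,\rho_Y\circ f$ is lost. But you give no way around it, and the routes you sketch do not lead anywhere. The contraction $\lvert (f^n)'(z_0)\rvert\,\rho_{U_n}(f^n(z_0))\le\rho_U(z_0)$, area-summability and density of repelling cycles are all equally present along the bounded returns of an oscillating wandering domain, and such domains exist. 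So these ingredients cannot by themselves give a contradiction; boundedness of the \emph{whole} orbit has to enter essentially. Selecting times at which $f$ is univalent on hyperbolic discs around $f^n(z_0)$ is easy. This works not by avoiding all critical values, which may be dense, but because only finitely many critical points lie in $V$; the paper gets univalence for all large $n$ in Lemma~\ref{lem:sph diameters and injectivity}. However, this does not address the real issue. Ye's argument is not a curvature count on univalent branches. It is a comparison of the hyperbolic metrics of two finitely punctured spheres, and these global metrics are not controlled by local univalence.

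The missing idea is the uniform deficit estimate, Theorem~\ref{thm:localdeficit}. Let $E$ be the finite set of critical values of critical points \emph{in $V$ only}, and let $P$ be any finite set with $f(P\cap V)\subset P$. Put $X_P=\C\setminus P$, $Y_P=X_P\setminus E$ and $\eta_P=\lvert f'\rvert\,\rho_{Y_P}\circ f$. Then $\eta_P$ can be smaller than $\rho_{X_P}$, but $\int_K(\rho_{X_P}^2-\eta_P^2)_+\,\dA\le C$ with $C$ independent of $P$. This follows from two facts:
\begin{enumerate}[(i)]
\item a bound $\rho_{X_P}/\eta_P\le M$ near $K$, uniform in $P$, obtained from local properness of $f$ and Lemma~\ref{lem:hypercompare};
\item the identity $\Delta u=\rho_{X_P}^2-\eta_P^2$ for $u=\log(\rho_{X_P}/\eta_P)$, since both metrics have curvature $-1$. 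Hence $u_+$ is subharmonic and extends across a finite set, so $\int\chi\,(\Delta u)_+\,\dA\le\log M\int\lvert\Delta\chi\rvert\,\dA$ for a cutoff $\chi$ equal to $1$ on $K$.
\end{enumerate}
You also need to replace the Fatou components by the components $\tilde U_n$ of $\Omega_V(f)$, which are simply connected by the maximum principle and contained in $V$. Then Ye's bookkeeping, using the Gauss--Bonnet increment $2\pi\lvert E\rvert$ and injectivity of $f$ on $W=\bigcup_{n\ge N}B_{\tilde U_n}(z_n,R)$, gives $\Area_{X_P}(B_{\tilde U_N}(z_N,R))\le C+2\pi\lvert E\rvert$. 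Finally, take finite sets $P_j$ with $f(P_j\cap V)\subset P_j$, built as iterated preimages in $V$ of dense finite subsets of $\partial V$. These satisfy $\rho_{\C\setminus P_j}\to\rho_{\tilde U_N}$ on $\tilde U_N$. Letting $j\to\infty$ bounds the hyperbolic area of a radius-$R$ disc in $\tilde U_N$ independently of $R$, which is absurd.
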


In fact, we prove a more general statement for locally
defined mappings, as follows. 
Suppose that $f\colon O\to\Ch$ is an open meromorphic function, where $O\subset\Ch$ is open.
Given an open set $V\subset O$, we define the set 
\begin{equation}\label{eqn:TV}
 T_V(f)\defeq \{z\in V:f^n(z)\in V\text{ for all }n\ge0\}
\end{equation}
 of points that stay in $V$ under iteration of $f$. Its interior 
  \begin{equation}\label{eqn:OmegaV}
  \Omega_V(f) \defeq\interior(T_V)\end{equation} 
 consists of all points that have an entire neighbourhood that
  remains in $V$ under iteration. Let $U$ be a connected component of $\Omega_V(f)$. We say that 
  $U$ is \emph{wandering} if $f^n(U)$ and $f^m(U)$ are contained in different 
  connected components of $\Omega_V(f)$ for all $n\neq m$. 
\begin{thm}[Local formulation on the sphere]\label{thm:spherelocal}
Let $O\subset\Ch$ be open and $f:O\to\Ch$ open and holomorphic.
Let $V\Subset O$ be open with
$\Ch\setminus\overline V\ne\varnothing$. 
\begin{enumerate}[(1)]
\item\label{it: no wandering orbit} Let $U$ be a wandering component of $\Omega_V(f)$, and let $z\in U$. Suppose that the connected component $U_n$ of $\Omega_V(f)$ containing $f^n(U)$ is simply connected for every $n\geq 0$. Then there is a strictly increasing sequence $(n_k)_{k=0}^{\infty}$ such that $\dist^{\#}(f^{n_k}(z),\partial V)\to 0$, where $\dist^{\#}$ denotes
spherical distance in $\Ch$.
\item\label{it: no wandering set} Suppose that $A\subset T_V(f)\setminus\Omega_V(f)$ is measurable such that
 \begin{enumerate}[(a)]
  \item $f^n(A)\cap f^m(A) =\varnothing$ for $n\neq m$,
   \item $f$ is injective on $\bigcup_{n=0}^{\infty} f^n(A)$, and
   \item $A$ has positive area. 
  \end{enumerate}
  Then $\bigcup_{n=0}^{\infty} f^n(A)$ is not contained in any compact subset of $V$.
\end{enumerate}
\end{thm}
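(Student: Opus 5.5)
The plan is to follow Ye's strategy: compare Euclidean area, which is summable because the orbit pieces are disjoint inside a set of finite area, with hyperbolic or derivative quantities, which cannot shrink too fast near points where $f$ is a local homeomorphism with bounded distortion. The link between the two is the Gauss--Bonnet theorem.

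\emph{Part (1).} I argue by contradiction. Suppose $\dist^{\#}(f^n(z),\partial V)\ge\delta>0$ for all large $n$, so that $z_n\defeq f^n(z)$ lies in a compact set $K\subset V$.
\begin{enumerate}[(i)]
\item Since $\Ch\setminus\overline V\neq\varnothing$, I normalise so that $V$ is a bounded plane domain. The components $U_n$ are then pairwise disjoint subsets of $V$, so $\sum_n\Area(U_n)\le\Area(V)<\infty$.
\item Each $U_n$ is simply connected, so Koebe's $1/4$ theorem gives $\rho_{U_n}(z_n)\asymp 1/r_n$, where $r_n=\dist(z_n,\partial U_n)$. The hyperbolic disc of fixed radius about $z_n$ has Euclidean area $\asymp r_n^2$. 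Summability of areas therefore forces $r_n\to0$.
\item Since $z_n\in K$ and $r_n\to0$, the boundary points of $U_n$ closest to $z_n$ lie in $V$ itself. They are points of $\partial\Omega_V$ inside $V$, and have points arbitrarily close by whose orbits leave $V$.
\item By Schwarz--Pick, $f\colon U_n\to U_{n+1}$ does not expand the hyperbolic metric. Hence $|(f^n)'(z)|\lesssim r_n/r_0\to0$, and the Euclidean derivative along the orbit decays.
\end{enumerate}

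To reach a contradiction I take a limit point $w\in K$ of $(z_{n_k})$. The aim is to control the hyperbolic geometry of $U_{n_k}$ near $w$ uniformly. The device (Ye's) is the Gauss--Bonnet formula applied to the pullback metric $(f^n)^*\rho_{U_n}$ on a fixed hyperbolic disc $D\subset U$. This metric has curvature $-1$ away from critical points, and each critical point contributes a positive cone-angle defect. Its total area is bounded by the Euclidean area of $f^n(D)$ weighted by $\rho_{U_n}^2$. The boundary term involves the geodesic curvature of $\partial D$ in the pullback metric. Because $f$ is a fixed holomorphic map near $K$ with bounded derivative, this term can be compared with $\rho_{U_{n+1}}$ at $f(\zeta)$.

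The resulting estimate should show that the ratio $\rho_{U_{n+1}}(f(\zeta))|f'(\zeta)|/\rho_{U_n}(\zeta)$ stays uniformly away from $1$ only finitely often. On the other hand, $r_n\to0$ with $z_n$ staying in $K$ forces infinitely many definite contractions. Making this dichotomy precise is the step I expect to be the main obstacle. Concretely, one must show that a simply connected component pinched near a point of $K$, where $f$ has bounded distortion, loses a definite amount of hyperbolic length under $f$. The accumulated contraction then makes $f^{n_k}$ on $D$ converge to a constant $w$. From this I would derive a contradiction with $U_{n_k}$ being distinct components near $w$: a neighbourhood of $w$ would be mapped into $K$ forever, so it would lie in one component of $\Omega_V$.

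\emph{Part (2).} This is a measurable version of the same argument. Let $K$ be a compact subset of $V$ containing $\bigcup_n f^n(A)$. By injectivity and disjointness,
\[
\sum_n\int_A|(f^n)'|^2\,\dA=\sum_n\Area(f^n(A))\le\Area(K)<\infty,
\]
so $(f^n)'\to0$ on $A$ in $L^2$, and almost everywhere along a subsequence. I then take a Lebesgue density point $a\in A$ at which the derivatives decay and the forward orbit avoids the critical points of $f$. Since $K$ is compact in $O$, $f$ has uniformly bounded distortion on fixed-size discs around orbit points away from critical points. A Koebe-type argument then shows that small discs around $a$ are mapped by $f^n$ into $\eps$-neighbourhoods of $K$, which are still inside $V$. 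That would put $a$ in $\Omega_V$, contradicting $A\cap\Omega_V=\varnothing$. The delicate point, as in part (1), is to keep the discs from shrinking faster than the derivative decays. I would again use the Gauss--Bonnet area-versus-curvature bookkeeping for this.
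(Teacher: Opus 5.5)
There is a genuine gap. Your sketch never supplies a mechanism that actually produces a contradiction, and the steps you point to do not work.

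\textbf{Part (1).} Items (i)--(iv) only give $r_n\to0$ and $|(f^n)'(z)|\to0$. These hold for \emph{any} orbit of pairwise disjoint simply connected components in a bounded $V$, and they are fully consistent with wandering.

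The proposed dichotomy is unsupported on both sides. First, Gauss--Bonnet for $(f^n)^*\rho_{U_n}$ on a fixed disc $D\subset U$ gives no usable information. By Pick's theorem this metric is at most $\rho_U$, so its area on $D$ is trivially at most $\Area_U(D)$ for every $n$, and nothing in it links different $n$. Second, $r_n\to0$ is Euclidean shrinking, not hyperbolic contraction. Nothing you have established rules out $f\colon U_n\to U_{n+1}$ being a conformal isomorphism for all large $n$. In that case the ratio $|f'|\,\rho_{U_{n+1}}\circ f/\rho_{U_n}$ is identically $1$, while still $r_n\to0$ and $|(f^n)'(z)|\to0$.

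The last step does not follow either. The convergence $f^{n_k}|_D\to w$ is automatic once $r_n\to0$ (by Koebe), so it cannot by itself give a contradiction. Under your standing assumptions $w\in\partial\Omega_V$, so every neighbourhood of $w$ contains points whose orbits leave $V$. Showing otherwise is the whole content of the theorem.

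\textbf{Part (2).} The Koebe argument is circular. Controlling $f^n$ on a fixed disc about a density point $a$ requires $f^n$ to be defined and univalent there for all $n$, i.e.\ that the forward images of the disc stay in $V$. That is essentially the conclusion $a\in\Omega_V$. One-step distortion bounds for $f$ do not compound. Decay of $(f^n)'$ at $a$ says nothing about nearby points, and since $a\notin\interior T_V$ these include points that leave $V$.

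You also never use the decisive finiteness hypothesis: since $V\Subset O$, the set $E$ of critical values of $f$ on $V$ is finite. Summable Euclidean area is the wrong currency. One must play the infinite \emph{hyperbolic} area of a simply connected component against a bound that is uniform in some approximation.

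\textbf{The missing ingredient.} Pulling back finite subsets of $\partial V$ gives finite sets $P_j\subset\overline V$ with $f(P_j\cap V)\subset P_j$. Their union has closure $J$ disjoint from $\Omega_V$ and containing $\partial\Omega_V\cup(T_V\setminus\Omega_V)$. Set $X_P=\C\setminus P$, $Y_P=X_P\setminus E$ and $\eta_P=|f'|\,\rho_{Y_P}\circ f$. Theorem~\ref{thm:localdeficit} bounds $\int_{K\cap f^{-1}(Y_P)}(\rho_{X_P}^2-\eta_P^2)_+\,\dA$ by a constant $C$ independent of $P$. Two facts give this:
\begin{itemize}
\item a local covering argument gives $\rho_{X_P}/\eta_P\le M$ uniformly on $K$;
\item $u=\log(\rho_{X_P}/\eta_P)$ satisfies $\Delta u=\rho_{X_P}^2-\eta_P^2$, so $u_+$ is subharmonic and its Riesz mass near $K$ is controlled by $\log M$ via a test function.
\end{itemize}
Gauss--Bonnet enters only through the fact that deleting the points of $E$ adds at most $2\pi|E|$ of hyperbolic area. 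Consequently, for measurable $W\subset K$ with $f|_W$ injective and $f(W)\subset W\setminus B$, one gets $\Area_{X_P}(B)\le C+2\pi|E|$.

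In part (1), take $W$ to be the union of the hyperbolic balls $B_{U_n}(z_n,R)$ for $n\ge N(R)$, on which $f$ is eventually univalent, and take $B$ to be the first of them. Letting $j\to\infty$, $\rho_{X_{P_j}}\to\rho_{U_{N(R)}}$ locally uniformly. Hence $\Area_{\DD}(B_{\DD}(0,R))\le C+2\pi|E|$ for every $R$, which is absurd.

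In part (2), take $W=\bigcup_n f^n(A^*)$, where $A^*$ is obtained from $A$ by removing a countable set. The densities $\rho_{\C\setminus P_j}$ increase with $j$ and tend to $\infty$ pointwise on $A^*\subset J\setminus\bigcup_j P_j$. Monotone convergence then forces $\Area(A^*)=0$.
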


The idea of the proof in~\cite{ye26} relies on the fact that removing $N$ points from 
 a finitely punctured sphere increases the total hyperbolic area by $2\pi N$
 (compare Lemma~\ref{lem:increment}
 below). This, together with the fact that $f$ acts as a covering map over the complement
 of the set of 
 singular values, is used to conclude that the hyperbolic area of the wandering domain
 in question is finite, which is a contradiction to the fact that the wandering domain
 is conformally equivalent to the hyperbolic plane, which has infinite area.

 In our setting, we \emph{cannot} exclude the 
 singular values from our surfaces under consideration. The idea
 for dealing with this issue is as follows. If our
 wandering domain remains within a bounded part of the complex plane, then it is natural 
 to remove only those critical values whose critical points likewise lie within this
 bounded part. The function is not a covering map when considering the preimage of the surface,
 but it behaves similarly to a covering on the part of the Riemann surface of the inverse function
 (thought of as spread over the plane) that corresponds to our bounded domain. The authors
 explored this idea in collaboration with generative AI, and
 arrived at the first version of the proof of Theorem~\ref{thm:boundedorbitentire}. The proof
 presented here is fully written by the authors,
 and hopefully has added clarity and context
 over the version that first emerged from
 discussions with AI tools. The proof has
 also been auto-formalised in the Lean 
 proof assistant using generative AI~\cite{PRW2026formalisation}.

We note that we have formulated our proofs for 
 subsets of the Riemann sphere, but they should 
 carry over to functions $O\to \Sigma$, where
 $\Sigma$ is a Riemann surface and $O\subset \Sigma$
 is open. Using similar ideas, we have also been able to
 prove the 
 following result, which answers another open
 question in transcendental dynamics: 
 any wandering domain of a transcendental entire function
 $f$ has a limit function that belongs to the derived set of
 singular values of $f$. The proof will be
 provided in a subsequent revision of the article. 

\subsection*{Authors' statement on the use of generative AI}
 As mentioned above, generative AI (ChatGPT 6 Astra, 5.6 Sol, and Claude Opus 5) was used in a substantial way to develop the first draft of
  the proof of 
  our main theorem, starting from our understanding of the proof in~\cite{ye26} and our ideas
  for extending it. The proof was then presented and written by the authors in their 
  own words.
 Generative AI (ChatGPT 6 Astra) was
 also used to auto-formalise the proof
 in the Lean 
 proof assistant, and to proof-read
 a completed draft of the paper.

\subsection*{Acknowledgements}
 Our rapid development of the ideas that led to the proof of Theorem~\ref{thm:boundedorbitentire} 
  would not have been possible without the assistance of generative AI, nor without the 
  key ideas in~\cite{ye26}, which according to the author's statement was also assisted by
  AI. In turn, this assistance would undoubtedly not have been possible without the human mathematicians
  and thinkers who came before, and whose work is the basis on which modern AI tools
  have been built. It is our hope that the benefits of today's powerful new AI tools will
  likewise be used to benefit mathematics, and humanity, as a whole.

\section{Preliminaries} 

\subsection*{Basic notation}
As usual, we define the complex plane by $\C$ and
the Riemann sphere by $\Ch$. Boundaries and 
closures are taken in $\C$ if not explicitly
stated otherwise. We write 
 $V\Subset U$ to mean that
 $V$ is \emph{compactly contained} in $U$; that is,
 the closure of $V$ in $U$ is a compact subset of $U$.  
Euclidean distance is denoted by $\dist$, and
Euclidean balls with centre $z$ and radius $r$ 
are denoted $B(z,r)$.
If $x\in\R$, we set $x_+\defeq \max(x,0)$.

\subsection*{Hyperbolic geometry}
Suppose that $X$ is a hyperbolic Riemann surface~-- that is,
  one which is not
 a torus, a sphere or a once- or twice-punctured sphere.
 Then $X$ carries a unique complete conformal  metric 
 of constant curvature $-1$, called the 
 \emph{hyperbolic metric}. In local coordinates,
 we may write this metric as $\rho_X(z)\lvert dz\rvert$,
 where $\rho_X$ is a real-analytic function of $z$.
 In particular, if $X\subset\C$, then we may 
 describe the metric globally by its \emph{density} $\rho_X$
 with respect to the Euclidean metric. 
 The condition that the metric has curvature $-1$ means
 that $\Delta \log \rho_X = \rho_X^2$. 
 Hyperbolic distance in $X$ is denoted
 $\dist_X$, and hyperbolic discs are
 denoted $B_X(z,r)$.

 The density of the hyperbolic metric on the unit disc 
  is given by 
    \[ \rho_{\DD}(z) = \frac{2}{1-\lvert z\rvert^2}. \]
 A key fact is that holomorphic maps between
 hyperbolic surfaces do not 
  expand hyperbolic metrics; see \cite[Theorem 2.11]{milnor06}.

 \begin{thm}[Pick's theorem]
   Let $X$ and $Y$ be hyperbolic surfaces, and let
   $f\colon X\to Y$ be holomorphic. Then 
   the \emph{hyperbolic distortion} of $f$ at a point
   $z\in X$ is the quantity
   \[ \| \Deriv f(z)\|_X^Y \defeq \lvert f'(z)\rvert \cdot 
       \frac{\rho_Y(f(z))}{\rho_X(z)}\]
     (where the densities are understood in local 
     coordinates). 
     
     This quantity satisfies
      \[\| \Deriv f(z)\|_X^Y \leq 1\]
      for all $z\in X$, with equality if and only if
      $f\colon X\to Y$ is a holomorphic covering map. 

    In particular, if $X\subset Y$, then
     $\rho_X \geq \rho_Y$.
 \end{thm}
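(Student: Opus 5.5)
We would reduce Pick's theorem to the classical Schwarz lemma by passing to universal covers. Since $X$ and $Y$ are hyperbolic, the uniformisation theorem provides holomorphic universal coverings $\pi_X\colon\DD\to X$ and $\pi_Y\colon\DD\to Y$. By definition, the hyperbolic metrics on $X$ and $Y$ are the push-forwards of $\rho_{\DD}\lvert dz\rvert$, and these are well defined because deck transformations are Möbius automorphisms of $\DD$ and hence isometries of $\rho_{\DD}$. So $\pi_X$ and $\pi_Y$ are local isometries, and the hyperbolic distortion of $f$ at $z=\pi_X(w)$ equals the distortion at $w$ of any lift $\tilde f\colon\DD\to\DD$ with $\pi_Y\circ\tilde f=f\circ\pi_X$. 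Such a lift exists since $\DD$ is simply connected. We therefore only need the statement for $\tilde f\colon\DD\to\DD$.

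For the disc, fix $w\in\DD$ and pre- and post-compose $\tilde f$ with disc automorphisms sending $0\mapsto w$ and $\tilde f(w)\mapsto 0$. These automorphisms are hyperbolic isometries, as one checks directly from the formula $\rho_{\DD}(z)=2/(1-\lvert z\rvert^2)$. This gives $g\colon\DD\to\DD$ with $g(0)=0$ and the same distortion at $0$, namely $\lvert g'(0)\rvert$. The Schwarz lemma, proved via the maximum principle applied to $g(z)/z$, yields $\lvert g'(0)\rvert\le 1$, with equality if and only if $g$ is a rotation. Hence $\|\Deriv f(z)\|_X^Y\le 1$ everywhere.

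The equality case is the step that needs care. Suppose equality holds at a single point. Then $\tilde f$ is an automorphism of $\DD$, and so $f\circ\pi_X=\pi_Y\circ\tilde f$ is a covering map. Since $\pi_X$ is a covering, it follows that $f$ is a covering. For this last step we would check the evenly-covered neighbourhood property directly, using that $\pi_Y\circ\tilde f$ is a covering which factors through $\pi_X$. Conversely, if $f$ is a covering, then $f\circ\pi_X\colon\DD\to Y$ is a universal covering, so the lift $\tilde f$ is a biholomorphism, and the distortion equals $1$ at every point.

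The final claim follows by applying the inequality to the inclusion $X\hookrightarrow Y$, whose derivative is $1$: this gives $\rho_Y(z)/\rho_X(z)\le 1$. We expect the only genuinely delicate point to be the bookkeeping in the equality case, specifically showing that "$\tilde f$ is an automorphism" is equivalent to "$f$ is a covering"; the inequality itself is immediate from the Schwarz lemma.
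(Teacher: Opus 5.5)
Your proof is correct and follows essentially the same route as the source: the paper does not prove Pick's theorem itself but cites \cite[Theorem 2.11]{milnor06}, whose argument is yours --- lift $f$ to $\tilde f\colon\DD\to\DD$ through the universal coverings, normalise by disc automorphisms to apply the Schwarz lemma at the origin, and identify equality with $\tilde f$ being an automorphism, i.e.\ with $f$ being a covering. The covering-space facts you leave to be checked are standard for surfaces and hold: $f\circ\pi_X$ is a (universal) covering whenever $f$ is, and $f$ is a covering whenever $f\circ\pi_X=\pi_Y\circ\tilde f$ is.
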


We also make use of the following version of a standard
hyperbolic density estimate; see~\cite[Proposition 3.4]{mihaljevicbrandt-rempegillen13} or~\cite[Theorem 1]{minda17}.
\begin{lem}\label{lem:hypercompare} There
exist strictly decreasing, surjective
and continuous functions
\[\alpha,\beta \colon (0,\infty)\to (1,\infty)\]
with the following property.
Let $X$ be a hyperbolic Riemann surface and let $Y\subsetneq X$ be open and $z\in Y$. Then
\[ 1\leq \alpha(\delta) \leq \frac{\rho_Y(z)}{\rho_X(z)} 
  \leq \beta(\delta),\]
 where $\delta = dist_X(z,X\setminus Y)$, and the
 quotient of densities should be understood in
 local coordinates. 
\end{lem}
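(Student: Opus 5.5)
The plan is to lift everything to the unit disc and compare $\rho_Y$ with two explicit model domains: a punctured disc for the lower bound and a smaller round disc for the upper bound.

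\textbf{Reduction to the disc.} Since $X$ is hyperbolic, there is a universal covering map $\pi\colon\DD\to X$, which we normalise so that $\pi(0)=z$. Let $\tilde Y$ be the component of $\pi^{-1}(Y)$ containing $0$. The restriction of a covering map to a component of the preimage of an open connected set is again a covering, so $\pi\colon\tilde Y\to Y$ is a covering. By the equality case of Pick's theorem (applied to $\pi$ on $\DD$ and on $\tilde Y$), in local coordinates
\[ \frac{\rho_Y(z)}{\rho_X(z)} = \frac{\rho_{\tilde Y}(0)}{\rho_{\DD}(0)}. \]
Let $E\defeq\pi^{-1}(X\setminus Y)$, a nonempty closed subset of $\DD$. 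Since $\pi$ is a local isometry and hyperbolic distance in $X$ is realised by lifting paths, we have $\delta=\dist_X(z,X\setminus Y)=\dist_{\DD}(0,E)$. Closed hyperbolic balls in $\DD$ are compact, so this infimum is attained at some $w\in E$ with $\dist_{\DD}(0,w)=\delta$. Put $r\defeq\tanh(\delta/2)$, so that $\lvert w\rvert=r$.

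\textbf{Upper bound.} The hyperbolic disc $B_{\DD}(0,\delta)=\{\lvert\zeta\rvert<r\}$ is connected, contains $0$ and misses $E$. Hence it lies in $\pi^{-1}(Y)$, and therefore in $\tilde Y$. By Pick's theorem applied to the inclusion,
\[ \rho_{\tilde Y}(0)\le\rho_{\{\lvert\zeta\rvert<r\}}(0)=\frac{2}{r}. \]
Since $\rho_{\DD}(0)=2$, this gives the bound with $\beta(\delta)\defeq 1/\tanh(\delta/2)$. This function is continuous and strictly decreasing, and it maps $(0,\infty)$ onto $(1,\infty)$.

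\textbf{Lower bound.} We have $\tilde Y\subset\DD\setminus E\subset\DD\setminus\{w\}$, so Pick's theorem gives $\rho_{\tilde Y}(0)\ge\rho_{\DD\setminus\{w\}}(0)$. A Möbius automorphism of $\DD$ exchanging $0$ and $w$ shows that
\[ \frac{\rho_{\DD\setminus\{w\}}(0)}{\rho_{\DD}(0)}=\frac{\rho_{\DD^*}(w)}{\rho_{\DD}(w)}, \]
where $\DD^*=\DD\setminus\{0\}$. Using the explicit density $\rho_{\DD^*}(\zeta)=1/(\lvert\zeta\rvert\log(1/\lvert\zeta\rvert))$, we define
\[ \alpha(\delta)\defeq\frac{1-r^2}{2r\log(1/r)},\qquad r=\tanh(\delta/2). \]
This is continuous. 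It tends to $\infty$ as $r\to0$, and it tends to $1$ as $r\to1$, since $1-r^2\sim2(1-r)$ and $\log(1/r)\sim 1-r$. Moreover $\alpha>1$ by the strict case of Pick's theorem, because the inclusion $\DD^*\hookrightarrow\DD$ is not a covering. Finally, $\alpha\le\beta$ holds automatically once both are shown to be valid bounds, for instance by applying them with $X=\DD$ and $Y=\DD\setminus\{w\}$.

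\textbf{Main obstacle.} The only non-formal point is that $\alpha$ is \emph{strictly} decreasing, which is needed for surjectivity onto $(1,\infty)$. Since $r$ increases with $\delta$, this amounts to $g(r)=(1-r^2)/(r\log(1/r))$ being strictly decreasing on $(0,1)$. I would verify it with the substitution $r=e^{-t}$, which gives $g=2\sinh(t)/t$. This is strictly increasing in $t$, because its derivative has the sign of $t\cosh t-\sinh t>0$, and $t$ decreases as $r$ increases. If that computation gave trouble, the fallback is to replace $\alpha$ by any strictly decreasing continuous surjection onto $(1,\infty)$ lying below it.
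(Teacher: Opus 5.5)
Your proof is correct and complete. The paper does not prove this lemma itself but cites \cite[Proposition 3.4]{mihaljevicbrandt-rempegillen13} and \cite[Theorem 1]{minda17}; your argument (lift to the universal cover, then compare with the round disc of radius $\tanh(\delta/2)$ and with $\DD\setminus\{w\}$) is the standard proof of such estimates, and it gives the explicit sharp choices $\beta(\delta)=\coth(\delta/2)$ and $\alpha(\delta)=\sinh(t)/t$ with $t=\log\coth(\delta/2)$. One small correction: if $Y$ is disconnected, $\pi|_{\tilde Y}$ is a covering onto the component of $Y$ containing $z$ (whose density is what $\rho_Y(z)$ means), not onto all of $Y$.
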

\begin{rmk}
 The exact expressions 
 for $\alpha$ and $\beta$ are given in~\cite[Proposition 3.4]{mihaljevicbrandt-rempegillen13}, but we do not require
 them. 
\end{rmk}

 In our proofs, we will approximate 
  hyperbolic domains of the plane by finitely punctured
  spheres. The relation between the 
  hyperbolic metric of the limit to that of its
  approximations is
  described by the 
  following qualitative lemma. (See also~\cite[Lemma 2.2]{ye26}.)
\begin{lem}\label{lem:convergence of densities}
 Let $J\subset\C$ be closed and non-empty, and let 
   $(P_j)_{j=0}^{\infty}$ be an increasing sequence of
   finite subsets with $\# P_j \geq 2$, and such that
   $P_\infty \defeq \bigcup_{j=0}^{\infty} P_j$ 
   is dense in $J$. Then: 
    \begin{enumerate}
        \item\label{it:boundary} If $z \in J \setminus P_\infty$, then $\rho_{\C \setminus P_j}(z) \to \infty$ as $j \to \infty$.
        \item\label{it:interior} If $z$ belongs to a connected component $U$ of $\C \setminus J$, then $\rho_{\C \setminus P_j}(z)$ converges locally uniformly on $U$ to $\rho_U(z)$ as $j \to \infty$.
    \end{enumerate}
\end{lem}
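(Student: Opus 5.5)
We first reduce both parts to statements about monotone limits of hyperbolic densities. Since $P_j\subset P_{j+1}$, the domains $\C\setminus P_j$ decrease, so Pick's theorem gives that $\rho_{\C\setminus P_j}$ is pointwise nondecreasing in $j$. Moreover, for $z\in U$ we have $U\subset \C\setminus J\subset\C\setminus P_j$, so again by Pick's theorem $\rho_{\C\setminus P_j}\le\rho_U$ on $U$. Hence on $U$ the limit $\rho\defeq\lim_j\rho_{\C\setminus P_j}$ exists and satisfies $\rho\le\rho_U$.

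\textbf{Part (\ref{it:boundary}).} Fix $z\in J\setminus P_\infty$. We use the lower bound in Lemma~\ref{lem:hypercompare} with $X=\C\setminus P_j$ and $Y=X\setminus\{p\}$, or more directly a standard lower bound for the density near a puncture. I would proceed directly. Choose $p\in P_\infty$ with $|p-z|<\eps$, and a second point $q\in P_0$ with $q\neq p$; this is possible because $\# P_0\ge 2$. For $j$ large we have $\{p,q\}\subset P_j$, so $\rho_{\C\setminus P_j}(z)\ge\rho_{\C\setminus\{p,q\}}(z)$. The density of the twice-punctured plane tends to $\infty$ near each puncture, with asymptotic $\approx 1/(|z-p|\log(1/|z-p|))$, after normalising by an affine map sending $p\mapsto0$, $q\mapsto 1$. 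Since $z\notin P_\infty$ and $\eps$ is arbitrary, while $q$ stays fixed, letting $\eps\to0$ gives $\rho_{\C\setminus P_j}(z)\to\infty$.

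\textbf{Part (\ref{it:interior}).} This is the main step: we must show $\rho=\rho_U$ and that the convergence is locally uniform. First, the functions $\rho_j\defeq\rho_{\C\setminus P_j}$ on $U$ are bounded above by $\rho_U$, which is locally bounded. They are also locally bounded below by $\rho_0>0$. Since $\log\rho_j$ solves $\Delta\log\rho_j=\rho_j^2$ with uniformly bounded right-hand side on compacta, the family is equicontinuous by elliptic estimates. Alternatively, one can use Harnack-type control from the fact that they are densities of metrics under which the Euclidean discs $B(w,r)\Subset U$ have bounded hyperbolic diameter. Either way, Dini's theorem (monotone convergence to a continuous limit) or Arzel\`a--Ascoli upgrades pointwise convergence to local uniform convergence once we know the limit is $\rho_U$.

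To identify the limit, I would use universal coverings. Fix $z_0\in U$ and let $\pi_j\colon\D\to\C\setminus P_j$ be universal coverings with $\pi_j(0)=z_0$ and $\pi_j'(0)>0$, so that $\pi_j'(0)=2/\rho_j(z_0)$. These maps omit at least two fixed points of $P_0$, so by Montel's theorem they form a normal family. Pass to a subsequence converging to some $\pi$. Since $\pi_j'(0)\ge 2/\rho_U(z_0)>0$, the limit $\pi$ is nonconstant. The key claim is $\pi(\D)\subset U$. The image $\pi(\D)$ is open and connected and contains $z_0$. Suppose it contained a point of $J$. Then by density of $P_\infty$ and Hurwitz's theorem, it would contain a point $p\in P_\infty$ in its interior, and $\pi-p$ has a zero. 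For $j$ large, $p\in P_j$ and $\pi_j\to\pi$, so Hurwitz forces $\pi_j$ to take the value $p$, which contradicts $\pi_j$ omitting $P_j$. Hence $\pi\colon\D\to U$ is holomorphic, and by Pick's theorem $\rho_U(z_0)\le 2/\pi'(0)=\lim 2/\pi_j'(0)=\rho(z_0)$. Combined with $\rho\le\rho_U$, this gives $\rho(z_0)=\rho_U(z_0)$. The limit is thus independent of the subsequence, so full convergence holds. Continuity of $\rho_U$ and monotonicity then give local uniform convergence by Dini's theorem.
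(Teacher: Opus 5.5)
Your proof is correct, and for part (2) it takes a genuinely different route from the paper.

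\textbf{The paper's route.} The paper applies the upper bound $\beta$ of Lemma~\ref{lem:hypercompare} with $X=\C\setminus P_j$ and $Y=U$. This reduces the claim to showing that the hyperbolic distance in $\C\setminus P_j$ from $z$ to $\partial U$ tends to infinity. It shows this by choosing $p_1,p_2\in P_n$ very close to a boundary point $\zeta$ and rescaling affinely to $\C\setminus\{0,1\}$: the image of $z$ escapes to $\infty$ while that of $\zeta$ stays bounded.

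\textbf{Your route.} You identify the limit softly. The normalised universal coverings $\pi_j$ subconverge by Montel. The limit is non-constant because $\pi_j'(0)=2/\rho_{\C\setminus P_j}(z_0)\geq 2/\rho_U(z_0)$. Hurwitz forces $\pi(\D)\subset U$, and Pick's theorem then gives $\rho_U(z_0)\leq\lim_j\rho_{\C\setminus P_j}(z_0)$. Local uniformity follows from monotonicity and Dini's theorem.

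\textbf{What each buys.} The paper's route is economical in context, since Lemma~\ref{lem:hypercompare} is needed anyway for Lemma~\ref{lem:localratio}. It is also in principle quantitative, bounding $\rho_U/\rho_{\C\setminus P_j}$ in terms of how densely $P_j$ fills $\partial U$ near $z$. On the other hand, as written it argues pointwise in $\zeta\in\partial U\setminus P_\infty$, whereas Lemma~\ref{lem:hypercompare} needs the distance to all of $(\C\setminus P_j)\setminus U$, so an extra compactness step is required. Your argument yields no rates but avoids such uniformity issues altogether. It is essentially the standard proof that hyperbolic densities are continuous under decreasing (kernel) convergence of domains.

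\textbf{Part (1).} Here the two proofs are essentially the same. The paper uses the lower bound $\alpha$ of Lemma~\ref{lem:hypercompare} with $X=\C\setminus P_0$. You compare with $\C\setminus\{p,q\}$ and use the blow-up of $\rho_{\C\setminus\{0,1\}}$ at a puncture, which is the same rescaling trick the paper uses in its part (2).

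\textbf{Minor points.}
\begin{itemize}
\item Fix $q\in P_0$ at the outset and note that $p\neq q$ once $\eps<\dist(z,P_0)$, which is positive since $z\notin P_\infty$.
\item Hurwitz is not needed to find a point of $P_\infty$ in $\pi(\D)$; openness and density suffice.
\item The elliptic-estimates remark is superfluous given Dini.
\item Like the paper, you use that ``dense in $J$'' includes $P_\infty\subset J$; this is what guarantees $U\subset\C\setminus P_j$.
\end{itemize}
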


\begin{proof}
    Let $z \in J \setminus P_\infty$. 
     Then 
     $\lim_{j\to\infty}\dist(z, P_j)=0$ by choice of
     the sets $P_j$. The hyperbolic metric of $\C\setminus P_0$ and the Euclidean metric are comparable in
     a neighbourhood of $z$, so 
     also $\lim_{j\to\infty} \dist_{\C\setminus P_0}(z,P_j)=0$. By Lemma~\ref{lem:hypercompare},
     $\rho_{\C\setminus P_j}(z)/\rho_{\C\setminus P_0}(z)\to\infty$, establishing~\eqref{it:boundary}. 

    To prove~\ref{it:interior}, by Lemma~\ref{lem:hypercompare} it is enough to
    show that 
     \[ \lim_{j\to\infty}\dist_{\C\setminus P_j}(z,\zeta)\to\infty \] for every $z\in U$ and every
     $\zeta\in\partial U\setminus P_{\infty}$.
     To see this, let $\eps>0$ and let $n$ be 
     so large that there are $p_1,p_2\in P_n$ with
     $\dist(\zeta,p_2)<\dist(p_1,p_2)<\eps$. Let
     $\lambda\colon \C\to\C$ be the affine map that
     takes $p_1$ to $0$ and $p_2$ to $1$; then
     $\lambda(\zeta)$ remains bounded as $\eps\to 0$, while
     $\lambda(z)\to\infty$. We conclude that
     \[ \dist_{\C\setminus P_j}(z,\zeta) \geq 
         \dist_{\C\setminus \{p_1,p_2\}}(z,\zeta) =
         \dist_{\C\setminus\{0,1\}}(\lambda(z),\lambda(\zeta)) \to\infty \] 
         as $\eps\to 0$. 
\end{proof}

\begin{lem}\label{lem:increment}
    Let $P, Q \subset \widehat{\C}$ be two finite subsets such that $P \subset Q$ and $|P| \geq 3$. Then for any measurable subset $U \subset \widehat{\C} \setminus Q$, 
    $$
        \Area_{\widehat{\C} \setminus Q}(U) - \Area_{\widehat{\C} \setminus P}(U) \leq 2\pi(|Q| - |P|).
    $$
\end{lem}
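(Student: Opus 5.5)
The plan is to compare the two hyperbolic densities pointwise and then integrate over the whole sphere, where the total areas are known exactly by Gauss--Bonnet. Write $\rho_P \defeq \rho_{\Ch\setminus P}$ and $\rho_Q\defeq \rho_{\Ch\setminus Q}$, expressed in a fixed local coordinate (for instance the spherical or Euclidean one; the choice does not matter since area is conformally invariant). Both surfaces are hyperbolic because $|Q|\geq |P|\geq 3$.

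\emph{Step 1: pointwise comparison.} Since $P\subset Q$, we have $\Ch\setminus Q\subset \Ch\setminus P$. The final clause of Pick's theorem, applied to the inclusion, gives $\rho_Q\geq\rho_P$ on $\Ch\setminus Q$. Hence the integrand $\rho_Q^2-\rho_P^2$ is nonnegative on $\Ch\setminus Q$.

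\emph{Step 2: extend the integral to the whole surface.} Because the integrand is nonnegative and $U\subset\Ch\setminus Q$,
\[ \Area_{\Ch\setminus Q}(U)-\Area_{\Ch\setminus P}(U)=\int_U(\rho_Q^2-\rho_P^2)\,\dA \leq \int_{\Ch\setminus Q}(\rho_Q^2-\rho_P^2)\,\dA . \]
Since $Q\setminus P$ is finite and hence of measure zero, the right-hand side equals $\Area(\Ch\setminus Q)-\Area(\Ch\setminus P)$, provided both total areas are finite. That finiteness follows from Step 3. It also makes the left-hand side well defined, since $\Area_{\Ch\setminus P}(U)\leq\Area(\Ch\setminus P)<\infty$.

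\emph{Step 3: total areas.} For a sphere with $n\geq3$ punctures, the hyperbolic area is $-2\pi\chi = 2\pi(n-2)$. Substituting $n=|Q|$ and $n=|P|$ gives the claimed bound $2\pi(|Q|-|P|)$.

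The only step requiring care is Step 3, namely Gauss--Bonnet for a complete finite-area surface with cusps. I would first try to cite it as standard. If a self-contained argument is needed, I would proceed as follows. Near each puncture the hyperbolic metric is asymptotic to the standard cusp metric $\lvert dz\rvert/(\lvert z\rvert\log(1/\lvert z\rvert))$. This shows that each cusp neighbourhood has finite area, and it yields horocycle-like curves around each puncture whose length and total geodesic curvature contribution tend to $0$ and $2\pi$ respectively in the appropriate normalisation; equivalently, each truncated cusp contributes boundary turning tending to zero in the Gauss--Bonnet formula. Applying the classical Gauss--Bonnet theorem (curvature $-1$) to the compact surface obtained by removing small cusp neighbourhoods, and then letting the truncation shrink, gives $\Area=-2\pi\chi(\Ch\setminus\{n\text{ points}\})=2\pi(n-2)$.
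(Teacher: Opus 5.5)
Your proof is correct and is essentially the paper's argument. The paper cites Gauss--Bonnet for the hyperbolic area $2\pi(n-2)$ of an $n$-times punctured sphere and then says the claim follows from Pick's theorem, which is exactly your Steps 1--3; the only blemish is in your optional self-contained sketch, where the cleaner statement is that a horocycle around a cusp has geodesic curvature of absolute value $1$, so its total turning equals its length and tends to $0$, and no ``$2\pi$'' enters.
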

\begin{proof}
  It is a classical consequence of the Gauss--Bonnet
  theorem that the hyperbolic area of 
    an $n$-times punctured sphere is 
    $2\pi (n-2)$; see~\cite[Theorem~10.4.3]{Beardon1983}. The claim follows
    from Pick's theorem.
\end{proof}


\subsection*{Subharmonic functions}
We use the following standard facts concerning subharmonic  functions. Suppose $D\subset \C$ is a domain, then a function $u\colon D \to [-\infty,\infty)$ is subharmonic if it is upper semicontinuous and satisfies the mean value inequality
\[u(z) \leq \frac{1}{2\pi}\int_0^{2\pi} u(z+re^{it})dt,\]
for each $\overline{B(z,r)}\subset D$. If $u\in C^2(D)$, then this is equivalent to $\Delta u \geq 0$, where $\Delta$ denotes the Laplacian. 

If $u$ is subharmonic, 
the distributional Laplacian $\Delta u$ is positive in the sense of distributions. Hence by the Riesz representation theorem there exists a unique positive measure $\mu_u$ on $D$ such that
\[\int_D \chi d\mu_u=\int_D u\Delta \chi \dA\] for every $\chi\in C^\infty_c(D)$. We call $\mu_u$ the Riesz measure of $u$. See \cite{hayman-kennedy76}.

\section{Uniform area estimates on compact sets}\label{sec:loss}
In this section, we prove the key result for our proof, which concerns a comparison
of hyperbolic areas. In order to explain the idea, we briefly review Ye's proof of 
Sullivan's theorem. Suppose that $f$ is a rational function. We may choose some
finite forward-invariant approximation $P$ of the Julia set; that is, 
$P\subset J(f)$ and every point of $J(f)$ is within $\eps>0$ of some point in $P$. 
Consider the finitely punctured spheres $X\defeq \Ch\setminus P$ and
$Y\defeq X\setminus S(f)$, where $S(f)$ is the set of critical values of $f$. 
We have $f^{-1}(Y)\subset X$ and $f\colon f^{-1}(Y)\to Y$ is a covering map.
So by Pick's theorem, the hyperbolic metric on $f^{-1}(Y)$, which is precisely the pull-back 
of the hyperbolic metric on $Y$ under $f$, is no smaller than that on $X$. 

Now suppose that $(U_n)_{n=0}^{\infty}$ is an orbit of simply connected wandering domains
 such that the map $f$ is univalent along the orbit $U_n$. (A well-known argument shows that
 some forward image of a wandering domain of a rational function would have this property.) 
 Then the above argument shows that the hyperbolic area of the 
 wandering domains does not decrease under $f$, considered as a map from a subset of $X$ to
 $Y$, so 
   \[ \Area_{Y}\left(\bigcup_{j=1}^{\infty} U_n\right) \geq 
      \Area_{X} \left(\bigcup_{j=0}^{\infty} U_n\right). \]
 But only $U_0$ is missing from the upper union compared to the lower one, 
  and removing the finitely many points of $S(f)$ from $X$ increases the total
  hyperbolic area by at most $2\pi \# S(f)$. So we see that $\Area_X(U_0)$ is bounded by
  $2\pi \# S(f)$, which is a bound that is \emph{independent} of the size of $P$. On the other hand,
  if we choose $P$ sufficiently large, then the hyperbolic metric of $X$ on $U$ converges
  to the hyperbolic metric on $U$, which is a simply connected domain and therefore has infinite
  hyperbolic area. This yields the desired contradiction.

 The main difficulty that we need to overcome is that,
  for a general transcendental entire function,
  the set $S(f)$ of singular values 
  may be infinite; indeed, it may coincide with the entire complex plane. The key observation is
  the following. Form $X$ as before, but restrict our
  attention to a compact part $K$ of the complex plane. 
  Let $V$ be an open neighbourhood of $K$ and,
  in defining the surface $Y$, only remove those critical values that correspond to critical points in $V$. 
  Then $f\colon f^{-1}(Y)\to Y$
  is no longer a covering, so if
  we pass from the hyperbolic metric of $X$ to   the pull-back $\eta_P$ of the hyperbolic metric of $Y$,
  the metric may decrease in some places. The goal of this
  section is to show that the total decrease of such area is bounded by
  a constant, independently of $P$. 
  This is the statement of the following theorem, 
  which allows the remainder of the
  argument to go through similarly as before. 

\begin{thm}[Comparison of hyperbolic area and pullback]      \label{thm:localdeficit}
Let $V\subset\C$ be bounded and open, and let $f$ be open and meromorphic  on
a neighbourhood of $\overline V$. Fix a compact set
$K\subset V$ and distinct $a,b\in\C$. Then there is $C\geq 0$ with the following property. 

Define 
$E\defeq f(\{z\in V:f'(z)=0\})$.
For every finite set $P\supset\{a,b\}$ satisfying
$f(P\cap V)\subset P$, write
\[
 X_P\defeq \C\setminus P,\quad Y_P\defeq \C\setminus(P\cup E) \]
 and let $\eta_P$ be the density of the pullback of the hyperbolic metric of $Y_P$ 
 under $f$: 
 \[
 \eta_P(z)\defeq \lvert f'(z)\rvert \rho_{Y_P}(f(z)), \quad z \in f^{-1}(Y_P).
\]
Then 
\[
 \int_{K\cap f^{-1}(Y_P)}
       (\rho_{X_P}^2-\eta_P^2)_+\,\dA\le C.
\]
\end{thm}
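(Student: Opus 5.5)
The plan is to work with the logarithmic ratio
\[
 u(z)\defeq \log\frac{\rho_{X_P}(z)}{\eta_P(z)},\qquad z\in V\cap f^{-1}(Y_P),
\]
and to bound the Riesz mass of its positive part. Both metrics have curvature $-1$. We have $\Delta\log\rho_{X_P}=\rho_{X_P}^2$. Since $f^{-1}(Y_P)\cap V$ contains no critical points of $f$ (their images lie in $E$), $\log\eta_P$ is smooth there, and $\Delta\log\eta_P=\eta_P^2$ because the curvature $-1$ equation is invariant under local conformal pullback. Hence
\[
 \Delta u=\rho_{X_P}^2-\eta_P^2 \quad\text{on } V\cap f^{-1}(Y_P).
\]
In particular $u$ is subharmonic on the open set $\{u>0\}$, and the Riesz measure of $u_+$ restricted to $\{u>0\}$ is exactly $(\rho_{X_P}^2-\eta_P^2)\,\dA$. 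So the integral in the statement is at most $\mu_{u_+}(K)$, provided $u_+$ is subharmonic on a neighbourhood of $K$, independently of $P$.

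\textbf{Step 1: $u_+$ extends to a subharmonic function on $V$.} I set $u_+=0$ at the points of $V\setminus f^{-1}(Y_P)$ and check each kind of such point.
\begin{itemize}
\item If $f(z)\in E\setminus P$ with $z\notin P$, or $f(z)\in P$ with $z\notin P$, then $\rho_{Y_P}(f(z'))\to\infty$ as $z'\to z$. At such a point $f$ is locally univalent: this is automatic when $z$ is not critical, and if $z$ is critical then $f(z)\in E$, so the puncture at $f(z)$ pulls back to a puncture-type singularity. Meanwhile $\rho_{X_P}$ stays bounded near $z$, so $u\to-\infty$ and $u_+\equiv 0$ nearby.
\item At poles of $f$ in $V$ the same conclusion holds. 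The complete metric $\rho_{Y_P}$ near $\infty$ is comparable to $1/(\lvert w\rvert\log\lvert w\rvert)$, so $\eta_P\to\infty$ while $\rho_{X_P}$ is bounded.
\item If $p\in P\cap V$, then $f(p)\in P$ by assumption. Let $f$ have local degree $d$ at $p$. Near a puncture, $\rho_{X_P}(z)\sim 1/(\lvert z-p\rvert\log(1/\lvert z-p\rvert))$, and likewise
\[
\eta_P(z)\sim \frac{d\,\lvert z-p\rvert^{d-1}}{\lvert z-p\rvert^{d}\,\log(1/\lvert z-p\rvert^{d})}\sim \frac{1}{\lvert z-p\rvert\log(1/\lvert z-p\rvert)}.
\]
So $u$ is bounded near $p$, and $u_+$ extends subharmonically across the isolated point $p$ (bounded subharmonic functions extend over polar sets).
\end{itemize}
Since $u_+=\max(u,0)$ is subharmonic wherever $u$ is, and equals $0$ near all the bad points, $u_+$ is subharmonic on all of $V$. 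I expect this case analysis to be the main obstacle: making the asymptotics uniform enough to justify the removability at $P\cap V$, and handling points where $f(z)\in E\cap P$.

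\textbf{Step 2: an $L^1$ bound on $u_+$ that does not depend on $P$.} Since $X_P\subset\C\setminus\{a,b\}$ and $Y_P\subset\C\setminus\{a,b\}$, Pick's theorem gives
\[
 u\le \log\rho_{\C\setminus\{a,b\}}-\log\lvert f'\rvert-\log\rho_{\C\setminus\{a,b\}}\circ f \eqdef w .
\]
The function $w$ is independent of $P$. Each term is locally integrable on a neighbourhood of $\overline V$:
\begin{itemize}
\item $\log\lvert f'\rvert\in L^1_{\mathrm{loc}}$, since $f$ is open and hence nonconstant;
\item $\log\rho_{\C\setminus\{a,b\}}$ has only $\log$-type singularities at $a$ and $b$;
\item $-\log\rho_{\C\setminus\{a,b\}}(f(z))\lesssim \log\lvert f(z)\rvert+\log\log\lvert f(z)\rvert$ for large $\lvert f(z)\rvert$, which gives integrable $\log$-singularities at the poles of $f$.
\end{itemize}
Hence $\int_{V'}u_+\,\dA\le\int_{V'}w_+\,\dA<\infty$ for any fixed $V'$ with $K\subset V'\Subset V$.

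\textbf{Step 3: conclusion.} Choose $\chi\in C_c^\infty(V')$ with $0\le\chi\le1$ and $\chi=1$ on $K$. Then
\[
 \int_{K\cap f^{-1}(Y_P)}(\rho_{X_P}^2-\eta_P^2)_+\,\dA\le\mu_{u_+}(K)\le\int\chi\,d\mu_{u_+}=\int u_+\Delta\chi\,\dA\le\|\Delta\chi\|_\infty\int_{V'}w_+\,\dA\eqdef C.
\]
The constant $C$ depends only on $V$, $K$, $f$, $a$ and $b$, and not on $P$, as required.
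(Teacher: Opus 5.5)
Your overall architecture matches the paper's: the function $u=\log(\rho_{X_P}/\eta_P)$, the identity $\Delta u=\rho_{X_P}^2-\eta_P^2$, subharmonicity of $u_+$ across the finite exceptional set, and testing its Riesz measure against a cutoff $\chi$. The gap is in Step~2, which is exactly where independence of $P$ has to come from.

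The inequality $u\le w$ is false. From $X_P\subset\C\setminus\{a,b\}$, Pick's theorem gives $\rho_{X_P}\ge\rho_{\C\setminus\{a,b\}}$. This bounds the positive term $\log\rho_{X_P}$ of $u$ from \emph{below}, not above; only the term $-\log\rho_{Y_P}\circ f$ is controlled in the right direction. In general $\rho_{X_P}$ has no $P$-independent upper bound on $K$: it blows up wherever $P$ accumulates, as it must in the application. So you have to show that $\eta_P$ grows along with it.

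A warning sign is that Step~2 never uses the hypothesis $f(P\cap V)\subset P$, and without it the statement is false. Take $f(z)=2z$, $V=B(0,2)$, $K=\overline{\DD}$, $a=10$, $b=11$, and let $P$ consist of $a$, $b$ and the points of $\eps\mathbb{Z}^2\cap\overline{\DD}$. Then $E=\varnothing$, and comparing with $\C$ minus two adjacent grid points gives $\rho_{X_P}\ge c/\eps$ on $\overline{\DD}\setminus P$. On the other hand, $B(2z,1/2)\subset X_P$ gives $\eta_P(z)=2\rho_{X_P}(2z)\le 8$ for $3/4<\lvert z\rvert<1$. So the integral tends to $\infty$ as $\eps\to0$.

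The missing idea is a local covering argument that uses the invariance.
\begin{enumerate}[(1)]
\item Given $z_0\in\overline{V'}$, choose $G'\Subset G\Subset V$ and a disc $D$ about $f(z_0)$ with $f\colon G\to D$ proper and $f(\overline{G'})\Subset D$. Then $f\colon G\cap f^{-1}(Y_P)\to D\cap Y_P$ is a covering map, since the critical values of $f|_G$ lie in $E$.
\item $G\cap f^{-1}(Y_P)\subset X_P$ holds precisely because $f(P\cap V)\subset P$. Pick's theorem then gives $\rho_{X_P}\le\lvert f'\rvert\,\rho_{D\cap Y_P}\circ f$, hence $u\le\log(\rho_{D\cap Y_P}/\rho_{Y_P})\circ f$ on $G'\cap f^{-1}(Y_P)$.
\item The inequality $\rho_{Y_P}\ge\rho_{\C\setminus\{a,b\}}$, now used in the correct direction, bounds the $Y_P$-distance from $f(G')$ to $Y_P\setminus D$ from below independently of $P$. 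Lemma~\ref{lem:hypercompare} then bounds the ratio of densities by a constant.
\item Finitely many $G'$ cover $\overline{V'}$, giving $u\le\log M$ on $\overline{V'}\cap f^{-1}(Y_P)$ with $M$ independent of $P$.
\end{enumerate}
With this $L^\infty$ bound in place of your $L^1$ bound, Step~3 goes through as written with $C=\log M\int\lvert\Delta\chi\rvert\,\dA$. The asymptotic case analysis of Step~1 also becomes unnecessary: the same local bound, applied on small compact neighbourhoods, shows that $u$ is bounded above near every point of $V\setminus f^{-1}(Y_P)$.
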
  

To prove Theorem~\ref{thm:localdeficit}, we first
 bound the ratio of the corresponding
 hyperbolic metrics by a uniform constant. 

\begin{lem}[Comparison of hyperbolic metric and pullback]\label{lem:localratio}
Under the assumptions of  Theorem~\ref{thm:localdeficit},
there is a constant $M\geq 1$, which is 
independent of $P$, such that
\[
 \frac{\rho_{X_P}(z)}{\eta_P(z)}\leq  M
\]
for all $z\in K\cap f^{-1}(Y_P)$. 
\end{lem}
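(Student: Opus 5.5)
The plan is to compare $\rho_{X_P}$ and $\eta_P$ through an auxiliary domain $\Omega$ near $w\defeq f(z)$ over which $f$ is a genuine covering from a subset of $X_P$. The invariance $f(P\cap V)\subset P$ will keep that subset inside $X_P$. Lemma~\ref{lem:hypercompare} will then show that $\rho_\Omega$ and $\rho_{Y_P}$ differ at $w$ by a bounded factor. Concretely, for each $z\in K\cap f^{-1}(Y_P)$ I will produce a round disc $B=B(v,s)$ and a domain $A\subset V$ containing $z$ with the following properties. First, $w\in B(v,s/2)$. Second, with $\Omega\defeq B\setminus(P\cup E)$, the restriction $f\colon A\cap f^{-1}(\Omega)\to\Omega$ is a covering map. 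Third, $s>0$ is independent of $z$ and $P$.

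Granting this, set $A'\defeq A\cap f^{-1}(\Omega)$. If $\zeta\in A'\cap P$, then $\zeta\in P\cap V$, so $f(\zeta)\in P$, contradicting $f(\zeta)\in\Omega$. Hence $A'\subset X_P$. By Pick's theorem for the covering $f\colon A'\to\Omega$ and for the inclusion $A'\subset X_P$,
\[ \eta_\Omega(z)\defeq\lvert f'(z)\rvert\rho_\Omega(w)=\rho_{A'}(z)\ge\rho_{X_P}(z). \]
Since $\Omega=Y_P\cap B$, Lemma~\ref{lem:hypercompare} gives $\rho_\Omega(w)\le\beta(\delta)\rho_{Y_P}(w)$, where $\delta=\dist_{Y_P}(w,Y_P\setminus B)$. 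Because $\{a,b\}\subset P$, we have $Y_P\subset\C\setminus\{a,b\}$. Therefore $\delta$ is at least the $\C\setminus\{a,b\}$-distance from $w$ to $\partial B$, and $\partial B$ lies at Euclidean distance at least $s/2$ from $w$. By continuity and compactness of $\overline{f(K)}$, this distance has a positive lower bound $\delta_0$ depending only on $s$, $a$, $b$, and $K$. It becomes large, not small, when $w$ approaches $a$ or $b$. Combining the two inequalities gives $\rho_{X_P}(z)\le\beta(\delta_0)\,\eta_P(z)$, so $M\defeq\beta(\delta_0)$ works.

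The remaining task, which I expect to be the main obstacle, is constructing $A$ and $B$ uniformly. I will do this by a compactness argument using only $f$ and $K$, never $P$. Fix $K\subset V'\Subset V$ with $V'$ open. Then $f$ has only finitely many critical points $c_1,\dots,c_m$ in $\overline{V'}$, and $f$ is non-constant near each of them since $f$ is open. (If $f$ is meromorphic and has poles in $V$, I would work in a spherical chart near them; alternatively I would shrink so that $f(\overline{V'})$ is bounded. This is a technical point.)

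Choose $r>0$ so small that $f$ is conjugate to $\zeta\mapsto\zeta^{d_i}$ on a neighbourhood $A_i$ of $\overline{B(c_i,2r)}$. Then $f(A_i)$ contains a disc $B(v_i,s)$ with $v_i=f(c_i)\in E$, and $f(B(c_i,r))\subset B(v_i,s/2)$ after shrinking appropriately. For $z\in K\cap B(c_i,r)$, take $B=B(v_i,s)$ and $A$ the component of $f^{-1}(B)$ containing $c_i$. Over the punctured disc $B\setminus\{v_i\}$ the map $f$ is a $d_i$-fold covering. Since $v_i\in E$, removing the further points $P\cup E$ from $B$ and their preimages from $A$ leaves a covering, because the restriction of a covering to the full preimage of a subdomain is again a covering.

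For $z\in K\setminus\bigcup_i B(c_i,r)$, $\lvert f'\rvert$ is bounded below on a uniform neighbourhood. By a Koebe-type argument, or simply by uniform continuity and the inverse function theorem with uniform constants, $f$ is univalent on some $A=B(z,r')$ with $f(A)\supset B(w,s)$, for uniform $r',s>0$. Here I take $v=w$, and $f\colon A\cap f^{-1}(B)\to B$ is a homeomorphism, so again its restriction over $\Omega$ is a covering. Shrinking $s$ to the minimum over the two cases gives the required uniform constant.
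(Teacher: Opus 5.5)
Your proposal is correct and is essentially the paper's argument. It uses a local covering onto $B\cap Y_P$ from a subset of $V$ that lies in $X_P$ by the invariance $f(P\cap V)\subset P$, then Pick's theorem, then Lemma~\ref{lem:hypercompare} with $\rho_{Y_P}\geq\rho_{\C\setminus\{a,b\}}$ to get a $P$-independent $\delta$, and compactness of $K$ for uniformity. The paper avoids your critical/regular case split by taking, for every $z_0\in K$, any proper $f\colon G\to D$ with $f(\overline{G'})\Subset D$, and then a finite subcover with a constant for each piece.

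The only caveat concerns poles in $K$. Your fallback of shrinking so that $f(\overline{V'})$ is bounded is unavailable, since $K$ is prescribed. Euclidean discs of fixed radius also give no uniform $\delta$ near $\infty$, because $\rho_{\C\setminus\{a,b\}}(w)\to 0$ as $w\to\infty$. So near poles you must, as the paper does, use discs in the spherical metric (e.g.\ a disc about $\infty$, with $\infty$ removed from $\Omega$); there $\rho_{\C\setminus\{a,b\}}$ is bounded below by a multiple of the spherical density.
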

\begin{proof}
Observe that $\infty\notin Y_P$, so $f$
has no poles in $f^{-1}(Y_P)$, so
the density $\eta_P(z)$ is indeed 
defined and finite.

Let $z_0\in K$ and choose neighbourhoods $G'\Subset G\Subset V$ of $z_0$ and a disc $D$ centred at $f(z_0)$ so that $f\colon G \rightarrow D$ is proper, and $f(\overline{G'})\Subset D$.
(If $z_0$ is a pole, then $D$ is chosen to be
a disc on the Riemann sphere centred 
at $\infty$, and the calculations below
should be understood to be carried out
in local coordinates.)

Now, note that the restriction $f\colon G \cap f^{-1}(Y_P) \to D \cap Y_P$ is a covering map and $G\cap f^{-1}(Y_P)\subset X_P$. Indeed, if $z \in G\cap f^{-1}(Y_P)$ were in $P$, then $f(z)\in P$  by $f(P\cap V)\subset P$, contradicting $f(z)\in Y_P$.
It follows by Pick's theorem that
    \[\rho_{X_P}(z) \leq \rho_{G\cap f^{-1}(Y_P)}(z)=|f'(z)|\rho_{D\cap Y_P}(f(z)).\]
    Hence,
    \[\frac{\rho_{X_P}(z)}{\eta_P(z)}\leq \frac{\rho_{D\cap Y_P}(f(z))}{\rho_{Y_P}(f(z))}, \]
    for $z\in G' \cap f^{-1}(Y_P)$.
    It remains to compare the hyperbolic metrics of $D\cap Y_P$ and $Y_P$ on $f(G')$.

Now, since $a,b\in P$, by the definition of $Y_P$ we have that $\rho_{Y_P}\geq \rho_{\C\setminus\{a,b\}}$. Moreover, the image of $\overline{G'}$ is compactly contained in $D$. The density $\rho_{\C\setminus\{a,b\}}$ is bounded from below on $D\setminus \{a,b\}$ and the Euclidean distance between $f(\overline{G'})$ and  $\partial D$ is positive. So there exists $\delta>0$, independent of $P$, such that $d_{Y_P}(z,Y_P\setminus D)\geq \delta$ for $z\in f(G')\cap Y_P$.
By Lemma~\ref{lem:hypercompare}, we obtain a constant $M(G')$, independent of $P$, so that
\[\frac{\rho_{D\cap Y_P}(f(z))}{\rho_{Y_P}(f(z))} \leq M(G'),\]
for $z\in f(G')\cap Y_P$.
Finally, finitely many neighbourhoods $G'$ cover $K$, so we may take the maximum of each of the corresponding 
finitely many constants $M(G')$ to obtain $M$.   
\end{proof}

The idea now is that, in the situation of Lemma~\ref{lem:localratio}, the 
  function $\log\left(\frac{\rho_{X_P}}{\eta_P}\right)_+$ is a
  subharmonic function on $V\cap f^{-1}(Y_P)$, which is locally bounded
  near each point of $V\setminus f^{-1}(Y_P)$. As a result, we will be 
  able to bound its integral on $K$ by a bound that is independent of $P$, using  
  the following fact about subharmonic functions. 

\newcommand{\supp}{\operatorname{supp}}

\begin{lem}\label{lem:positivecutoff}
Let $F$ be a finite subset of an open set $V\subset\C$, and let $u\in C^2(V\setminus F)$
be locally bounded above near $F$. Suppose that $u(z)>0$ if and only if
$\Delta u(z)>0$, for all $z\in V\setminus F$. 
Let $\chi\in C_c^{\infty}(V)$ be a non-negative test function, and suppose that there are  
   $m\geq 0$ and a neighbourhood $U$ of $\supp \Delta\chi\subset \supp \chi$ such that $u(z)\leq m$
   for all $z\in U\setminus F$. Then
\[
 \int_{V\setminus F}\chi \cdot(\Delta u)_+\,\dA
 \le m\int_V|\Delta\chi|\,\dA.
\]
\end{lem}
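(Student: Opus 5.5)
The plan is to pass from $u$ to its positive part $w\defeq u_+=\max(u,0)$, show that $w$ is subharmonic on all of $V$, and then compare the Riesz measure of $w$ with $(\Delta u)_+\,\dA$ by integrating against $\chi$.

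First I will check that $w$ is subharmonic on $V\setminus F$. It is continuous there, since $u\in C^2(V\setminus F)$. Subharmonicity is local, so I will verify the sub-mean value property near each point $z_0\in V\setminus F$, using small circles.
\begin{itemize}
\item If $u(z_0)>0$, then $w=u$ near $z_0$. By hypothesis $\Delta u>0$ there, so $w$ is subharmonic near $z_0$.
\item If $u(z_0)\le 0$, then $w(z_0)=0$. Since $w\ge 0$ everywhere, every circle average of $w$ about $z_0$ is $\ge 0=w(z_0)$, so the mean value inequality holds trivially.
\end{itemize}
Hence $w$ is subharmonic on $V\setminus F$. Moreover, $w$ is locally bounded above near $F$, because $u$ is. Finite sets are polar, so the removable singularity theorem for subharmonic functions that are bounded above lets $w$ extend to a subharmonic function on $V$; I will use the extension $w(p)=\limsup_{z\to p}w(z)$ for $p\in F$. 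Let $\mu_w$ denote its Riesz measure.

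Next I compare measures. On the open set $W\defeq\{z\in V\setminus F: u(z)>0\}$ we have $w=u\in C^2$, so $\mu_w|_W=\Delta u\,\dA$. By the hypothesis, $(\Delta u)_+$ vanishes on $(V\setminus F)\setminus W$, so $(\Delta u)_+=\Delta u\cdot\mathbf 1_W$. Since $\mu_w\ge 0$ and $\chi\ge 0$, this gives
\[
\int_{V\setminus F}\chi\,(\Delta u)_+\,\dA=\int_W\chi\,d\mu_w\le\int_V\chi\,d\mu_w=\int_V w\,\Delta\chi\,\dA .
\]
The last equality is the definition of the Riesz measure.

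To finish, note that $\Delta\chi$ vanishes outside $\supp\Delta\chi$, which lies inside the neighbourhood $U$. On $U\setminus F$ we have $0\le w\le m$, because $u\le m$ there. Since $F$ has measure zero,
\[
\int_V w\,\Delta\chi\,\dA\le\int_{U\setminus F} w\,\lvert\Delta\chi\rvert\,\dA\le m\int_V\lvert\Delta\chi\rvert\,\dA ,
\]
which is the claimed bound.

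The main point needing care is the extension across $F$. The identity $\int\chi\,d\mu_w=\int w\,\Delta\chi\,\dA$ must hold with test functions whose support meets $F$, so that no mass of $\Delta w$ concentrated on $F$ is lost. The measure $\mu_w$ may charge points of $F$, but it is positive, so this only helps the inequality. What is essential is that $w$ really is subharmonic across $F$, and this is exactly where the assumption that $u$ is locally bounded above near $F$ enters.
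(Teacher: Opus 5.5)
Your proposal is correct and follows essentially the same route as the paper. Both pass to $u_+$ and check subharmonicity on $V\setminus F$ by the same two-case argument, then extend across the polar set $F$ using the removable singularity theorem. Both then bound $\int\chi(\Delta u)_+\,\dA$ by $\int\chi\,d\mu$ and this in turn by $\int u_+\Delta\chi\,\dA\le m\int|\Delta\chi|\,\dA$. The only difference is harmless: you use the bound $u_+\le m$ only on $U\setminus F$ together with the fact that $F$ is null, whereas the paper asserts directly that the extension satisfies $v\le m$ on all of $U$.
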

\begin{proof}
Set $v=u_+$ on $V\setminus F$. Then $v$ is subharmonic on $V\setminus F$, since on $\{z\colon u(z)>0\}$ we have $v=u$ and $\Delta u>0$. On the other hand, if $u(z)\leq 0$,
then $v(z)=0$, and $v$ satisfies the mean value inequality
at $z$, since $v$ is globally nonnegative. 

Now $u$ is locally bounded above near $F$, and hence so is $v$. Because $F$ is a finite (and so polar) set, \cite[Theorem 5.18]{hayman-kennedy76} implies that $v$ extends to a subharmonic function on $V$, with
$v(z)\leq m$ for $z\in U$. 

Consider the Riesz measure of $v$, which we denote $\mu_v$. On $\{z\colon u(z)>0\}$, we have $v=u$  by definition, so $\mu_v=\Delta u ~\dA$. Also, since $u(z)>0$ if and only if $\Delta u>0$, we have that $(\Delta u)_+=0$ on $\{z: u(z) \leq 0\}$. 
Hence,
\[\mu_v \geq 1_{V\setminus F}\cdot (\Delta u)_+ \dA.\]
Since $\chi$ is nonnegative,
\[\int_{V\setminus F}\chi\cdot (\Delta u)_+ \dA \leq \int_V\chi d\mu_v.\]
Now by the definition of the Riesz measure,
\[\int_{V}\chi d\mu_v = \int_Vv\Delta\chi \dA.\]
Finally, since $v\leq m$ on $U$, which is 
  a neighbourhood of the support of $\Delta\chi$,
\[\int_Vv\Delta\chi \dA \leq m \int_V|\Delta\chi| \dA .\]
Combining the inequalities, we obtain the desired result.
\end{proof}
\begin{proof}[Proof of Theorem~\ref{thm:localdeficit}]
    Choose a compact set $K'\subset V$ with $K\subset \interior(K')$ and choose  a nonnegative
    test function $\chi \in C_c^\infty(K')$ such that $\chi=1$ on a  neighbourhood of $K$. By Lemma~\ref{lem:localratio} applied to $K'$, there is $M\geq 1$, independent of $P$, such that 
    \[
 \frac{\rho_{X_P}(z)}{\eta_P(z)}\leq  M
\] for all $z\in K' \cap f^{-1}(Y_P)$.

Let $F\defeq V\setminus f^{-1}(Y_P)=V\cap f^{-1}(P\cup E\cup\{\infty\})$, which is a finite set, and define $u(z)\defeq \log \bigl({\rho_{X_P}(z)}/{\eta_P(z)}\bigr)$ on $V\setminus F$.
By Lemma~\ref{lem:localratio}, $u$ is locally bounded above near $F$.
In particular, $u \leq \log M$ on a neighbourhood of the support of $\Delta \chi$, since the support of $\Delta \chi$ is compactly contained in $K'$.

Since hyperbolic metrics have curvature $-1$, $\Delta \log \rho_{X_P}=\rho^2_{X_P}$, and similarly for
$\rho_{Y_P}$. Since curvature is a conformal invariant
 and $f'(z)\neq 0$ on $V\setminus F$, the pullback 
 $\eta_P$ of $\rho_{Y_P}$ under $f$ 
 also has constant curvature $-1$, i.e.\ 
$\Delta \log \eta_P = \eta_P^2$. 
(This can also be checked directly by expanding \[\Delta \log \eta_P=\Delta \log |f'| + \Delta(\log \rho_{Y_P} \circ f)=|f'|^2(\Delta \log \rho_{Y_P})\circ f,\] and then substituting $\Delta \log \rho_{Y_P}=\rho^2_{Y_P}$.)

Hence, our function $u$ satisfies $\Delta u= \rho^2_{X_P} - \eta_P^2$. Moreover, $u(z)>0$ if and only if $\rho_{X_P}(z)>\eta_P(z)$ if and only if $\Delta u(z) >0$.
We can thus apply Lemma~\ref{lem:positivecutoff} to  $u$, 
which gives
\[\int_{V\setminus F}
       \chi \cdot (\rho_{X_P}^2-\eta_P^2)_+\,\dA \le \log M\int_V|\Delta\chi|\,\dA,\]
and since $\chi=1$ on $K$,
\[
 \int_{K\cap f^{-1}(Y_P)}
       (\rho_{X_P}^2-\eta_P^2)_+\,\dA\le \log M\int_V|\Delta\chi|\,\dA \eqdef C\geq 0,
\]
as claimed. (Note that 
the right hand side is indeed independent of $P$.)
\end{proof}

\section{Absence of bounded-orbit wandering domains for locally defined functions}\label{sec:absence}

With Theorem~\ref{thm:localdeficit} in place, we 
are now ready to carry the central argument
of~\cite{ye26} over to our more general setting,
where it takes the following form. 
\begin{lem}[Uniform area bound]\label{lem:localcancel}
We continue to use the notation of Theorem~\ref{thm:localdeficit}, 
with its fixed compact set $K$ and constant
 $C$.
Let $W\subset K\cap f^{-1}(Y_P)$ be measurable and suppose that  $f|_W$ is injective.
If $B\subset W$ is measurable and $f(W)\subset W\setminus B$,
then
\[
 \Area_{X_P}(B)\le C+2\pi|E|.
\]
\end{lem}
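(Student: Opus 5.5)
We follow the template of Ye's argument sketched at the start of Section~\ref{sec:loss}, replacing the covering property by Theorem~\ref{thm:localdeficit}. The plan is to compare $\Area_{X_P}(W)$ with the $Y_P$-area of $f(W)$, using that $f|_W$ is injective, and then to return from $Y_P$ to $X_P$ via Lemma~\ref{lem:increment}.

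First, by the change of variables formula for the injective map $f|_W$, and since $\eta_P^2=\lvert f'\rvert^2\rho_{Y_P}^2\circ f$ is the pullback density, we have
\[
 \int_W \eta_P^2\,\dA = \int_{f(W)} \rho_{Y_P}^2\,\dA = \Area_{Y_P}(f(W)).
\]
Here $f(W)\subset Y_P$ because $W\subset f^{-1}(Y_P)$, and critical points form a null set, so they cause no trouble. Next, we write $\rho_{X_P}^2\le \eta_P^2+(\rho_{X_P}^2-\eta_P^2)_+$ pointwise on $W\subset K\cap f^{-1}(Y_P)$. Integrating over $W$ and applying Theorem~\ref{thm:localdeficit} gives
\[
 \Area_{X_P}(W)\le \Area_{Y_P}(f(W)) + C.
\]

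Second, we pass from $Y_P$ back to $X_P$. The set $f(W)$ avoids $P\cup E$, and $Y_P=\C\setminus(P\cup E)$ is the sphere punctured at $P\cup E\cup\{\infty\}$. We apply Lemma~\ref{lem:increment} with $P'=P\cup\{\infty\}$ and $Q=P\cup E\cup\{\infty\}$; note $|P'|\ge3$ since $a,b\in P$. This is valid provided $E$ is finite, and $E$ is finite because $f'$ has only finitely many zeros on $V$: $f$ is open and meromorphic on a neighbourhood of the compact set $\overline V$. We obtain
\[
 \Area_{Y_P}(f(W))\le \Area_{X_P}(f(W))+2\pi|E\setminus P|\le \Area_{X_P}(f(W))+2\pi|E|.
\]
Then $f(W)\subset W\setminus B$ gives $\Area_{X_P}(f(W))\le \Area_{X_P}(W)-\Area_{X_P}(B)$.

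Combining the two steps yields $\Area_{X_P}(W)\le \Area_{X_P}(W)-\Area_{X_P}(B)+C+2\pi|E|$. The step needing the most care is the subtraction of $\Area_{X_P}(W)$, which requires this quantity to be finite. Since $W\subset K$ is bounded, $\Area_{X_P}(W)\le\Area_{X_P}(\C)=\Area_{\Ch\setminus(P\cup\{\infty\})}=2\pi(|P|-1)<\infty$. This bound depends on $P$, but only finiteness is needed, so cancelling gives $\Area_{X_P}(B)\le C+2\pi|E|$. The remaining routine checks are measurability of $f(W)$ (the image of a measurable set under a locally bi-Lipschitz injective map, away from a finite critical set) and the null-set issue for the critical points in the change of variables.
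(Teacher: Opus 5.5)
Your proof is correct and follows the paper's argument essentially line by line. You bound $\Area_{X_P}(W)$ by $\Area_{Y_P}(f(W))+C$ via Theorem~\ref{thm:localdeficit} and change of variables, pass back to $X_P$ with Lemma~\ref{lem:increment}, and cancel using $f(W)\subset W\setminus B$. Your extra checks are correct and make explicit points the paper leaves implicit: adding $\infty$ to both puncture sets so Lemma~\ref{lem:increment} applies with at least three punctures, finiteness of $E$, and finiteness of $\Area_{X_P}(W)\le 2\pi(|P|-1)$ to justify the cancellation.
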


\begin{proof}
    Theorem~\ref{thm:localdeficit} implies that
\begin{align*}
    \Area_{X_P}(W)
    &= \int_W \rho_{X_P}^2\,\dA \leq \int_W \eta_{P}^2\,\dA + \int_W (\rho_{X_P}^2 - \eta_P^2)_+\,\dA  
    \leq \int_W \eta_P^2\,\dA + C \\
    &= \int_W |f'(z)|^2 \rho_{Y_P}^2(f(z))\,\dA + C 
    = \Area_{Y_P}(f(W)) + C.
\end{align*}
On the other hand, by Lemma~\ref{lem:increment},
$\Area_{Y_P}(f(W)) \leq \Area_{X_P}(f(W)) + 2\pi|E|$. So
\begin{align*}
    \Area_{X_P}(W)
    \leq \Area_{Y_P}(f(W)) + C 
    &\leq \Area_{X_P}(f(W)) + C + 2\pi|E| \\
    &\leq \Area_{X_P}(W \setminus B) + C + 2\pi|E|.
\end{align*}
Substituting $\Area_{X_P}(W) = \Area_{X_P}(B) + \Area_{X_P}(W \setminus B)$ gives the desired inequality.
\end{proof}

For locally defined functions, the finite invariant sets 
 $P_j$ that we wish to approximate the ``Julia set''
 can no longer necessarily be chosen to be periodic points.
 Instead, we will obtain them by taking preimages 
 of finite subsets of the boundary of their domain
 of definition. 

\begin{lem}\label{lem:sets p_j}
    Let $V \subset \C$ be a bounded open set, and let $f \colon V \to \widehat{\C}$ be meromorphic on a neighbourhood of $\overline{V}$ and non-constant on every component of $V$. Then there exists an increasing sequence of finite subsets $(P_j)_{j=0}^{\infty}$ 
    of $\overline{V}$ such that the following hold. 
    \begin{enumerate}[(1)]
        \item\label{item:atleast2} $|P_j| \geq 2$ for all $j\geq 0$.
        \item\label{item:invariance} $f(P_j \cap V) \subset P_j$ for all
        $j\geq 0$.
        \item\label{item:julia} $J\cap \Omega_V(f)=\varnothing$, where
         $J\defeq \overline{\bigcup_{j=1}^\infty P_j}$.
         \item\label{item:conn_comp} For every connected 
        component $U$ of $\C\setminus J$, 
        either $U$ is a connected component 
        of $\Omega_V(f)$, or $U\cap T_V(f)=\varnothing$.
    \end{enumerate}
\end{lem}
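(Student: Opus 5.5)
Let $T\defeq T_V(f)$ and $\Omega\defeq\Omega_V(f)$. The plan is to obtain the $P_j$ as finite subsets of the backward orbit of $\partial V$ under $f$, truncated so that \eqref{item:invariance} holds. For $n\geq 0$ define
\[ L_n\defeq\{z\in\overline V: z,f(z),\dots,f^{n-1}(z)\in V,\ f^{n}(z)\in\partial V\}, \]
the points that leave $V$ at time exactly $n$ by landing on $\partial V$. Also include $L_\infty\defeq \partial V\cup(\overline V\setminus \bigcup_n\dots)$ in the obvious form. The set $D\defeq \bigcup_{n} L_n$ is forward invariant in the required sense: if $z\in D\cap V$ then $z\in L_n$ with $n\geq1$, and then $f(z)\in L_{n-1}$. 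Each $L_n$ with $n\ge 1$ is contained in $f^{-n}(\partial V)$ intersected with $V$. By non-constancy this is a countable union of analytic-curve pieces, hence separable.

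Choose a countable dense subset $D_0$ of $D$. Close it under forward images inside $V$: each point $z\in L_n$ has a finite forward orbit $z,f(z),\dots,f^n(z)$ ending in $\partial V$. Enumerate the resulting countable forward-closed set as $d_1,d_2,\dots$. Let $P_j$ be the union of the forward orbits of $d_1,\dots,d_j$, together with two fixed points of $\partial V$ to ensure \eqref{item:atleast2}; since $V$ is bounded, $\partial V$ contains at least two points. Each $P_j$ is finite, the sequence is increasing, and \eqref{item:invariance} holds because forward orbits stay in $P_j$ until they hit $\partial V\setminus V$.

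For \eqref{item:julia}, set $J=\overline D$. A point of $\Omega$ has a neighbourhood $N$ whose orbit stays in $V$ forever, so $N$ misses every $L_n$. Hence $N\cap D=\varnothing$, and so $\Omega\cap J=\varnothing$.

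For \eqref{item:conn_comp}, let $U$ be a component of $\C\setminus J$ with $U\cap T\neq\varnothing$. I first show $U\subset T$. Suppose $z\in U\setminus T$. If $z\notin \overline V$, then, since $U$ is connected and meets $T\subset V$, $U$ meets $\partial V\subset J$, which is a contradiction. Otherwise some first iterate $f^n(z)$ leaves $V$. Connect $z$ to a point $w\in U\cap T$ by a path $\gamma$ in $U$. For each $t$, consider the first exit time from $V$. Along $\gamma$ the first exit time changes only where some point has $f^k$ landing on $\partial V$, that is, where $\gamma$ meets some $L_k$. This uses continuity of $f^k$ and the openness of $V$: the set of $\gamma$-parameters with orbit in $V$ up to time $k$ is open, and its boundary points lie in $L_{k'}$ for some $k'\le k$. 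Hence $\gamma$ meets $D\subset J$, a contradiction. So $U\subset T$, and since $U$ is open, $U\subset\Omega$. Because $U$ is connected it lies in a single component $U'$ of $\Omega$. Conversely, $U'$ is connected, open, and disjoint from $J$ by \eqref{item:julia}, so $U'\subset U$. Hence $U=U'$.

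The main obstacle is the path argument. One has to make sure that leaving $V$ along a continuous path forces some point to land exactly on $\partial V$ at some time before the orbit leaves the domain of definition. This works because $f^k$ is continuous on the open set of points whose first $k-1$ iterates lie in $V$, and $f$ is defined on a neighbourhood of $\overline V$. Poles need care: if $f^k$ takes the value $\infty$, we have $\infty\notin\overline V$, so continuity on $\Ch$ still forces crossing $\partial V$. A second point to check is that the closure $\overline{\bigcup P_j}$ equals $\overline D$, which holds by density of $D_0$.
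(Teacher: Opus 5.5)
Your argument is correct, but it proves properties (2)--(4) by a genuinely different route from the paper.

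The paper fixes finite sets $Q_j\subset\partial V$ with dense union and takes $P_j$ to be \emph{all} preimages of $Q_j$ inside $V$ of depth at most $j$. Finiteness of $P_j$ then uses the non-constancy hypothesis. The key step is the backward invariance $V\cap f^{-1}(J)\subset J$, which uses that $f$ is open: a neighbourhood $W\Subset V$ of $z$ is mapped onto a neighbourhood of $f(z)\in J$, so $W$ contains a preimage of a point of $P_\infty$, and that preimage again lies in $P_\infty$. Property (4) then follows because $f$ maps a component of $\C\setminus J$ meeting $T_V(f)$ into another such component, and induction gives $f^n(U)\subset V$.

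You instead let $J$ be the closure of the full set $D$ of points whose orbit lands exactly on $\partial V$ at its first exit time. You get finiteness for free by taking forward orbits of an enumerated countable dense subset of $D$. You prove $U\subset T_V(f)$ by a first-exit argument along a path $\gamma$ in $U$: the parameter sets where $\gamma(t),\dots,f^k(\gamma(t))\in V$ are relatively open, and their boundary points lie in $L_0\cup\dots\cup L_k\subset D$.

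What each approach buys:
\begin{itemize}
\item Your argument uses only continuity of $f$, as a map into $\Ch$, on a neighbourhood of $\overline V$. So your version of the lemma holds without openness or non-constancy.
\item It also shows that $J$ does not depend on choices: for open $f$ the paper's $J$ equals your $\overline D$, precisely by the backward invariance.
\item The paper's route, in exchange, gives explicit sets $P_j$ and records the natural dynamical property $V\cap f^{-1}(J)\subset J$.
\end{itemize}

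Two small blemishes, neither of which is a gap. First, the sentence introducing $L_\infty$ is meaningless and should be deleted; $L_0=\partial V$ already gives the inclusion $\partial V\subset D\subset J$ that you need. Second, the claim that $V\cap f^{-n}(\partial V)$ is a countable union of analytic arcs is false in general, since $\partial V$ is an arbitrary compact set. You only need separability, which every subset of $\C$ has.
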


\begin{proof}
    Let $(Q_j)_{j=0}^{\infty}$ be an increasing sequence of finite subsets of $\partial V$ such that $|Q_0| \geq 2$ and $\overline{\bigcup_{j = 1}^\infty Q_j} = \partial V$.
    Define
    $$
        T_{j, 0} \defeq Q_j, \quad T_{j, r + 1} \defeq T_{j, r} \cup (V \cap f^{-1}(T_{j, r})), \quad \text{and}\quad P_j = T_{j, j}.
    $$

    Clearly, $(P_j)$ forms an increasing sequence of finite sets satisfying~\ref{item:atleast2} and~\ref{item:invariance}. 

    Set $P_\infty \defeq \bigcup_{j=1}^\infty P_j$. Since $\bigcup_{j = 1}^\infty Q_j$ is dense in $\partial V$, we have $\partial V \subset J$. Moreover, every point of $P_\infty \cap V$ eventually maps to
    $\partial V$, and hence does not belong to $T_V(f)$. Since $\Omega_V(f)$ is open, it follows that $J \cap \Omega_V(f) = \varnothing$, proving~\ref{item:julia}.

    We next observe that $V \cap f^{-1}(J) \subset J$. Indeed, suppose that
    $z \in V$ and $f(z) \in J$. Let $W \Subset V$ be any neighbourhood of $z$. Then
    $f(W)$ is a neighbourhood of $f(z)$, and hence contains some point $a \in P_\infty$, 
    say $a \in P_j$. Choose $w \in W$ with $f(w)=a$. Since
    $P_j=T_{j,j}$ and $T_{j,j}\subset T_{j+1,j}$, we have
    $w \in T_{j+1,j+1}=P_{j+1}$. Thus every neighbourhood of $z$ intersects $P_\infty$, so $z\in J$.

    Now let $U$ be a component of $\C\setminus J$ such that $U\cap T_V(f)\neq\varnothing$, and choose
    $z\in U\cap T_V(f)$. Since $\partial V\subset J$ and $U$ is connected, we have $U\subset V$.
    The previously proved inclusion $V \cap f^{-1}(J) \subset J$ also implies that $f(U)\cap J=\varnothing$. Since $f(U)$ is connected
    and contains $f(z)\in V$, 
    it is contained in a connected component $U_1$ 
    of $\C\setminus J$ with $U_1\cap T_V(f)\neq\varnothing$. 
    Inductively, we see that $f^n(U)\subset V$
    for every $n\geq0$. Hence $U\subset T_V(f)$, and since $U$ is open, $U\subset\Omega_V(f)$. Moreover,
    $\partial U\subset J$ and $J\cap \Omega_V(f)=\emptyset$,
    so $U$ is indeed a connected component of $\Omega_V(f)$.
   This proves~\ref{item:conn_comp}.   
\end{proof}

\begin{lem}\label{lem:sph diameters and injectivity}
    Let $O\subset\Ch$ be open and $f\colon O\to\Ch$ open and meromorphic. Let $V\Subset O$ be a bounded open subset of $\C$. Let $U$ be a wandering component of $\Omega_V(f)$ such that the connected component $U_n$ of $\Omega_V(f)$ containing $f^n(U)$ is simply connected for every $n \geq 0$.
    For $R > 0$, let $B_{U_n}(z_n,R)$ denote the hyperbolic ball in $U_n$ of radius $R$ centered at $z_n$. Then
    \begin{enumerate}[(1)]
        \item\label{it:sph diameters to zero} the Euclidean diameter of $B_{U_n}(z_n,R)$ tends to $0$ as $n \to \infty$;
        \item\label{it:injectivity} if the closure of the orbit $(z_n)_{n=0}^{\infty}$ is compactly contained in $V$, then there exists $N = N(R)$ such that, for every $n \geq N(R)$, the restriction $f|_{B_{U_n}(z_n,R)}$ is univalent.
    \end{enumerate}
\end{lem}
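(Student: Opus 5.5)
Here $z_n \defeq f^n(z)$ for some fixed $z\in U$, so $z_n\in U_n$.

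\emph{Part (1).} The plan is a disjointness-plus-area argument. The domains $U_n$ are pairwise distinct components of $\Omega_V(f)$, hence pairwise disjoint, and they all lie in the bounded set $V$. Therefore $\sum_n \Area(U_n)\le\Area(V)<\infty$, and in particular $\Area(U_n)\to 0$. It remains to show that a hyperbolic ball of fixed radius $R$ cannot have large Euclidean diameter inside a simply connected domain of small area. Suppose instead that along a subsequence $\operatorname{diam} B_{U_n}(z_n,R)\ge\eps>0$. Let $\phi_n\colon\DD\to U_n$ be the Riemann maps with $\phi_n(0)=z_n$. All $\phi_n$ take values in the bounded set $V$, so by Montel's theorem we may pass to a locally uniformly convergent subsequence $\phi_n\to\phi$. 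Since $\Area(U_n)=\int_\DD|\phi_n'|^2\,\dA\to0$, we get $\int_{\DD_r}|\phi'|^2\,\dA=0$ for every $r<1$, so $\phi$ is constant. The ball $B_{U_n}(z_n,R)$ is the image under $\phi_n$ of a fixed Euclidean disc $\overline{\DD_r}$ with $r=\tanh(R/2)<1$. On that compact disc, $\phi_n$ converges uniformly to a constant, so the diameters tend to $0$, which is a contradiction. A cleaner alternative is to use the Koebe $1/4$ theorem together with area: it gives $|\phi_n'(0)|^2\le\Area(U_n)/\pi$, and distortion on $\DD_r$ then bounds the diameter by $C(R)|\phi_n'(0)|$. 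I would use this version.

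\emph{Part (2).} The plan is a compactness argument near the orbit. Let $L\defeq\overline{\{z_n\}}$, which is compact in $V$ by assumption. First I would show that $L$ contains no critical point of $f$ and no pole. If $c\in L$ with $f'(c)=0$, then points $z_n$ would accumulate at $c$. Take a small disc $D$ around $c$ on which $f$ is a branched cover. By (1) the balls $B_n\defeq B_{U_n}(z_n,R)$ shrink to $c$ along this subsequence, so $B_n\subset D$ eventually, but the balls themselves need not contain $c$. So avoiding critical points is not the right approach.

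Instead I would argue directly. The set $L$ is compact, and $f$ has only finitely many critical points in a neighbourhood of $L$. Consider $n$ large such that $B_n$ lies within distance $\delta$ of $L$ and has diameter smaller than $\delta$. Here $\delta$ is chosen via a Lebesgue-number argument so that every set of diameter less than $\delta$ near $L$ either lies in a disc on which $f$ is univalent, or lies in a disc around a critical point $c$ on which $f$ is conjugate to $w\mapsto w^d$.

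In the first case we are done. In the second case, note that $c\notin U_n$ for large $n$: otherwise $c$ lies in infinitely many disjoint $U_n$, which is impossible, since distinct $n$ give disjoint domains and there are only finitely many critical points. The domain $U_n$ is simply connected and avoids $c$, and $B_n\subset U_n$. I claim that $f$ is injective on $B_n$. If $f(w)=f(w')$ with $w\ne w'$ in $B_n$, then $w'=\omega w$ in the local coordinate, where $\omega$ is a root of unity. Since $f^{k}(U_n)\subset U_{n+k}$ and the components wander, the images of $U_n$ under iteration stay in distinct components. I do not see this immediately yielding injectivity, so I would instead exploit the hyperbolic geometry. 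The hyperbolic geodesic in $B_n$ from $w$ to $w'$ has bounded hyperbolic length $2R$. Its image under $f$ is a closed curve in $U_{n+1}$, and $U_{n+1}$ is simply connected. The lift of this curve under $w\mapsto w^d$ starting at $w$ ends at $w'\ne w$, so the image curve winds around $f(c)$. Since $U_{n+1}$ is simply connected, it must then contain $f(c)$. I expect this step to be the main obstacle.

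To finish that step, I would use the fact that $f(c)$ lies in $U_{n+1}$ for infinitely many $n$, which contradicts disjointness of the wandering components, as only finitely many critical values occur. Hence $N(R)$ exists.
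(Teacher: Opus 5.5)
Your proposal is correct and follows essentially the paper's route. In (1) you obtain $|\phi_n'(0)|\to 0$ from $\Area(U_n)\to 0$ rather than from $\dist(z_n,\partial U_n)\to 0$, and then apply Koebe distortion exactly as the paper does; the bound $|\phi_n'(0)|^2\le\Area(U_n)/\pi$ actually comes from the area theorem, since Koebe's $1/4$ theorem gives it only up to a harmless factor $16$. In (2), your winding-number contradiction (a loop in the simply connected $U_{n+1}$ winding around $f(c)$, impossible for large $n$ because there are finitely many critical values and the $U_n$ are pairwise disjoint) is the contrapositive of the paper's observation that each component of $f^{-1}(B_{n+1}(R))\cap D_i$ maps univalently onto the simply connected, critical-value-free ball $B_{n+1}(R)$; also, poles never occur in the orbit closure $L$, since $f(L)\subset L\subset\C$.
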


\begin{proof}

    Let $\pi_n\colon \D \to U_n$ be a Riemann map such that $\pi_n(0) = z_n$. Observe that $B_{U_n}(z_n, R) = \pi_n(B_{\D}(0, r))$ for some $r \in (0,1)$ depending only on $R$.

    Suppose that $\dist(z_n,\partial U_n)$ does not tend to $0$. Then there exist $\delta>0$ and infinitely many $n$ such that $\D_\delta(z_n)\subset U_n$. This is
    impossible since the domains $U_n$ are pairwise disjoint and  the points $z_n$ remain in the fixed bounded set $V$.

    Hence $\dist(z_n,\partial U_n)\to0$. By Koebe's $1/4$-theorem,
    $$
    |\pi_n'(0)| \leq 4\dist(\pi_n(0),\partial U_n)\to0.
    $$
    At the same time, by Koebe's distortion theorem,
    $$
    |\pi_n(w)-\pi_n(0)| \leq \frac{r}{(1-r)^2}|\pi_n'(0)|,
    $$
    for $|w|\leq r$. It follows that the diameter of $B_{U_n}(z_n,R)$ tends to zero.

    Now let us establish~\ref{it:injectivity}. Let $K$ be the closure of the orbit $(z_n)$;
    then $K$ is forward-invariant and, in
    particular, contains no poles of $f$. Choose finitely many sufficiently small Jordan domains $D_i \Subset V$ and Jordan domains $D_i' \Subset D_i$ so
    that the $D_i'$ cover $K$ and
    $$
    f(z) = (\varphi_i \circ g_d \circ \psi_i^{-1})(z) \quad \text{for } z \in D_i,
    $$
    where $g_d(z) = z^d$ for $d \geq 1$, and $\varphi_i\colon \D \to f(D_i) \eqdef G_i$ and $\psi_i\colon \D \to D_i$ are biholomorphisms such that $\varphi_i(0) = f(w_i)$ and $\psi_i(0) = w_i$. In particular, $f$ has no critical points in $D_i$ 
    except possibly $w_i$. 

    Since the diameter of $B_n(R)$ tends to $0$, we may assume that, whenever $z_n \in D_i'$, we have $B_n(R) \Subset D_i$. Also, because
    the  set $E \defeq f(\{z \in V : f'(z) = 0\})$ of critical values of $f$ in $V$
     is finite and our domain
    is wandering, for all sufficiently large $n$, $U_n$, and hence $B_n(R)$, is disjoint from $E$.

    Again, since the diameter of $B_n(R)$ tends to $0$, we may assume that $\overline{B_{n+1}(R)} \subset G_i$, and as above 
    we may assume that 
    $B_{n+1}(R) \cap E = \varnothing$. It then follows from the definition of $G_i$ that each component of $f^{-1}(B_{n+1}(R)) \cap D_i$ is mapped univalently onto $B_{n+1}(R)$. By Pick's theorem, $f(B_n(R)) \subset B_{n+1}(R)$, and hence $B_n(R)$ is contained in the component containing $z_n$. Therefore, $f|_{B_n(R)}$ is univalent, as required.
\end{proof}

Now we are ready to prove Theorem \ref{thm:spherelocal}.

\begin{proof}[Proof of Theorem \ref{thm:spherelocal}\ref{it: no wandering orbit}]

After conjugating $f$ by a M\"obius transformation, we may assume that $V$ is bounded in $\C$. Suppose that the orbit $(z_n)_{n=0}^{\infty}$ is compactly contained in $V$. Choose a compact set $K \subset V$ such that the closure of 
the orbit is compactly contained in $\interior(K)$. We also choose sets $P_j$ as in Lemma~\ref{lem:sets p_j} and distinct points $a,b \in P_1$. Let the finite set $E$ and the constant $C$ be chosen as in Theorem~\ref{thm:localdeficit}.

Fix $R > 0$. By Lemma~\ref{lem:sph diameters and injectivity}, $B_n(R) \defeq B_{U_n}(z_n,R) \subset K$ and $f|_{B_n(R)}$ is injective for all $n \geq N(R)$. Since $f^{-1}(E) \cap V$ is finite, we may also assume that $U_n \cap f^{-1}(E) = \varnothing$ for all $n \geq N(R)$.

By Pick's theorem, we have $f(B_n(R)) \subset B_{n+1}(R)$. Define the union 
\[W_R := \bigcup_{n=N(R)}^\infty B_n(R).\] By
construction, $W_R \subset K \setminus f^{-1}(P_j \cup E)$ for all $j \geq 1$, and $f$ maps $W_R$ injectively into $W_R \setminus B_{N(R)}(R)$.

Lemma~\ref{lem:localcancel}, applied with $P = P_j$, now gives $\Area_{X_j}(B_{N(R)}(R)) \leq C + 2\pi|E|$ for every $j$, where $X_j = \C \setminus P_j$. By Lemma~\ref{lem:convergence of densities}\eqref{it:interior}, $\rho_{X_j}$ converges locally uniformly on $U_{N(R)}$ to~$\rho_{U_{N(R)}}$. Hence
$$
    \Area_{U_{N(R)}}(B_{N(R)}(R)) \leq C + 2\pi|E|,
$$
with a bound independent of $R$. This leads to a contradiction, since
$$
    \Area_{U_{N(R)}}(B_{N(R)}(R)) = 
    \Area_{\DD}(B_{\DD}(0,R))
$$
tends to $\infty$ as $R \to \infty$, since the
area of $\rho_{\DD}$ is infinite. The proof is complete.
\end{proof}

\begin{proof}[Proof of Theorem \ref{thm:spherelocal}\ref{it: no wandering set}]

Again, after conjugating $f$ by a M\"obius transformation, we may assume that $V$ is bounded. Suppose there exists a measurable set $A \subset T_V \setminus \Omega_V$ such that the sets $f^n(A)$ are pairwise disjoint, $f$ is injective on $W \defeq \bigcup_{n=0}^\infty f^n(A)$, and $W$ is compactly contained in $V$. We will show that $A$ has area zero.

Let $K \subset V$ be a compact set such that $W \subset K$. We also choose sets $P_j$ as in Lemma~\ref{lem:sets p_j} and distinct points $a,b \in P_1$. Let the finite set $E$ and the constant $C$ be chosen as in Theorem~\ref{thm:localdeficit}.

Define $S_0 := P_\infty \cup E$, where $P_\infty = \bigcup_{j=1}^\infty P_j$. Inductively, define $S_{r+1} := S_r \cup (V \cap f^{-1}(S_r))$, and let $S_\infty := \bigcup_{r=0}^\infty S_r$. Set $A^* \defeq A \setminus S_\infty$. Recall that
$T_V\setminus \Omega_V\subset J = \overline{P_{\infty}}$ by
Lemma~\ref{lem:sets p_j}~\ref{item:conn_comp}. So
$A^*\subset J\setminus P_{\infty}$. Since $S_\infty$ is countable, removing it from $A$ does not change its area. Let $W^* \defeq \bigcup_{n=0}^\infty f^n(A^*)$. Clearly, $f|_{W^*}$ is injective, $f(W^*) = W^* \setminus A^*$, and $W^* \subset K \setminus f^{-1}(P_j \cup E)$ for all $j \geq 1$.

Lemma~\ref{lem:localcancel}, applied with $P = P_j$, now gives $\Area_{X_j}(A^*) \leq C + 2\pi|E|$ for every $j$, where $X_j = \C \setminus P_j$. It is also easy to see that $A^* \subset J \setminus P_\infty$. Since the densities $\rho_{X_j}(z)$ are increasing by Pick's theorem, the monotone convergence theorem gives
$$
    \lim_{j \to \infty} \Area_{X_j}(A^*) = \int_{A^*} \lim_{j \to \infty} \rho_{X_j}^2 \,\dA \leq C + 2\pi|E|.
$$
However, by Lemma~\ref{lem:convergence of densities}\eqref{it:boundary}, $\rho_{X_j}(z)$ tends to $\infty$ as $j \to \infty$ for every $z \in A^*$. Therefore, $A^*$ must have measure zero.

\end{proof}

\section{Absence of bounded-orbit wandering domains for transcendental entire functions}

\begin{proof}[Proof of Theorem~\ref{thm:boundedorbitentire}]
  Let $f$ be a transcendental entire function with an orbit $(U_n)_{n=0}^{\infty}$ of 
   wandering domains. Suppose, by way of
   contradiction, that the conclusion of the
   theorem does not hold; that is, $U$ is a bounded-orbit wandering domain of $f$. 

  By definition of the Fatou set, $f^n|_U$ is a normal family in the sense of Montel
   (recall that normality is equivalent to equicontinuity by the Arzel\`a--Ascoli theorem). 
   It is well-known that all limit functions of the family of iterates in a 
   wandering domain are constant, and all finite
   such 
   constants belong to the Julia set~\cite[p. 18]{bergweiler93}. The assumption means that 
   the collection of these constants is a compact set $L\subset\C$. 
   
  Let $z_0\in U_0$ and let $D$ be a disc centred at $z_0$ with $\overline{D}\subset  U_0$. 
   Let $V$ be an open Euclidean disc centred
   at the origin, chosen large enough to ensure 
   that $L\subset V$. Then, 
   there is $n_0$ such that $f^n(\overline{D})\subset V$ for $n\geq n_0$. 

 For $n\geq n_0$, let $\tilde{U}_n$ be the connected  component of the set $\Omega_V(f)$ defined 
 in~\eqref{eqn:OmegaV} that contains $f^n(D)$. By the maximum principle,
   each $\tilde{U}_n$ is simply connected, and by Montel's theorem,
   $\tilde{U}_n\subset F(f)$, and hence $\tilde{U}_n\subset U_n$. 

 So $(\tilde{U}_n)_{n=n_0}^{\infty}$ is an orbit of
  simply connected wandering components of $\Omega_V(f)$, and
  hence satisfies the hypotheses of 
  Theorem~\ref{thm:spherelocal}~\ref{it: no wandering orbit}.
  Hence there is a strictly increasing 
  sequence $(m_k)_{k=0}^{\infty}$
  such that $\dist^{\#}(f^{n_0+m_k}(z),\partial V)\to 0$, which is a contradiction to the 
  fact that every limit of the sequence $(f^{n}(z))_{n=0}^{\infty}$ belongs
  to the compact set $L\subset V$.
\end{proof}

\bibliographystyle{amsalpha}

\bibliography{bibliography}
\end{document}